\title{Asymmetric hyperbolic L-spaces, \\ 
\mbox{}\hspace{1.5cm} Heegaard genus, and Dehn filling}
\author{Nathan M. Dunfield}
\address{ Dept.~of Math., MC-382 \\
          University of Illinois \\
          1409 W. Green St. \\
          Urbana, IL 61801 \\ 
          USA
}
\email{nathan@dunfield.info}
\urladdr{http://dunfield.info}
\author{Neil R. Hoffman}
\address{School~of Math. and Stat. \\
University of Melbourne\\
Parkville, VIC 3010 \\
Australia}
\email{nhoffman@ms.unimelb.edu.au}
\urladdr{http://ms.unimelb.edu.au/~nhoffman/}
\author{Joan E. Licata}
\address{Mathematical Sciences Institute\\
John Dedman Bldg 27\\
The Australian National University 0200\\
 Australia}
\email{joan.licata@anu.edu.au}
\urladdr{http://maths-people.anu.edu.au/~licataj/}
\newcommand{\centercolhead}[1]{\multicolumn{1}{c}{#1}}
\newcommand{\Na}{N_\alpha}
\newcommand{\Nb}{N_\beta}
\newcommand{\HoneZ}[1]{H_1(#1; \Z)}
\newcommand{\HoneZbdryN}{\HoneZ{\partial N}}
\newcommand{\tilt}[1]{\mathrm{Tilt}\!\left(#1\right)}
\newcommand{\interval}[1]{[#1]}
\newcommand{\inta}{\interval{a}}
\newcommand{\intb}{\interval{b}}
\newcommand{\intC}{\interval{C}}
\newcommand{\intCone}{\interval{C_1}}
\newcommand{\intCzero}{\interval{C_0}}
\newcommand{\intCp}{\interval{C^+_1}}
\newcommand{\intCm}{\interval{C^-_1}}
\begin{document}

\begin{abstract} 
  An $L$-space is a rational homology 3-sphere with minimal Heegaard
  Floer homology.  We give the first examples of hyperbolic $L$-spaces
  with no symmetries.  In particular, unlike all previously known
  $L$-spaces, these manifolds are not double branched covers of links
  in $S^3$.  We prove the existence of infinitely many such examples
  (in several distinct families) using a mix of hyperbolic geometry,
  Floer theory, and verified computer calculations.  Of independent
  interest is our technique for using interval arithmetic to certify
  symmetry groups and non-existence of isometries of cusped hyperbolic
  3-manifolds. In the process, we give examples of 1-cusped hyperbolic
  3-manifolds of Heegaard genus 3 with two distinct lens space
  fillings.  These are the first examples where multiple Dehn fillings
  drop the Heegaard genus by more than one, which answers a question
  of Gordon.
\end{abstract}
\maketitle

 \section{Introduction}
\subsection{Asymmetric \emph{L}-spaces}
For a rational homology \3-sphere $M$, the rank of its Heegaard Floer
homology $\HFhat(M)$ is always bounded below by the order of
$\HoneZ{M}$, and $M$ is called an \textit{$L$-space} when this bound is an
equality.  Lens spaces and other spherical manifolds are all
$L$-spaces, but these are by no means the only examples.  In fact, recent
work of Boyer, Gordon, and Watson \cite{BoyerGordonWatson2013} shows
that each of the eight \3-dimensional geometries has an
$L$-space. Their work is part of broader efforts to characterize
$L$-spaces via properties not obviously connected to Heegaard
Floer theory; specifically, they conjecture that a rational homology
sphere is an $L$-space if and only if its fundamental group is not
left-orderable. Although the conjecture has been resolved for seven of
the geometries, it remains open for the important case of hyperbolic
geometry as well as for most manifolds with non-trivial JSJ
decompositions.  

All previous examples of hyperbolic $L$-spaces have come via the
following specific type of surgery construction, and one of our main
results demonstrates that this is a construction of convenience rather
than necessity.  A \emph{strong inversion} of a cusped 3-manifold is
an orientation preserving, order-two symmetry which acts on each cusp
by the elliptic involution; any closed manifold obtained by Dehn
filling inherits this symmetry.  To date, all hyperbolic $L$-spaces
have been constructed by surgery on strongly invertible manifolds, and
moreover, the quotient of the $L$-space by the induced symmetry was always
$S^3$.  Recall that a hyperbolic 3-manifold is \emph{asymmetric} if
its only self-isometry is the identity map; by a deep theorem of
Gabai, this is equivalent to every self\hyp diffeomorphism being
isotopic to the identity \cite{Gabai2001}.  We show the following:
 \begin{theorem}\label{thm:main} 
   There exist infinitely many asymmetric hyperbolic $L$-spaces.  In
   particular, there are hyperbolic $L$-spaces which are neither
   regular covers nor regular branched covers of another \3-manifold.
\end{theorem}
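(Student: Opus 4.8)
The plan is to produce all the examples by Dehn filling a single carefully chosen $1$-cusped hyperbolic 3-manifold, so the real content is to locate such an $M$ enjoying two properties: (i) the complete hyperbolic structure on $M$ is \emph{asymmetric}, i.e.\ $\mathrm{Isom}(M) = 1$; and (ii) $M$ admits at least two distinct lens space Dehn fillings. Granting such an $M$, I claim that all but finitely many fillings $M(\alpha)$ are simultaneously rational homology spheres, hyperbolic, asymmetric, and $L$-spaces, which already proves the theorem. Three standard ingredients combine to give this: Thurston's hyperbolic Dehn surgery theorem (all but finitely many $M(\alpha)$ are hyperbolic, and all but one is a rational homology sphere); the structure theorem for $L$-space filling slopes coming from bordered Heegaard Floer homology (Rasmussen--Rasmussen, and Hanselman--Rasmussen--Watson via immersed curves); and a Mostow-rigidity argument controlling the isometry groups of the fillings.

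For the Floer-theoretic step, recall that lens spaces are $L$-spaces, so by (ii) the set $\mathcal{L}(M)$ of $L$-space filling slopes contains at least two elements. A $1$-cusped hyperbolic manifold is in particular a knot manifold (irreducible, with incompressible torus boundary, not a solid torus), so the structure theorem applies and tells us $\mathcal{L}(M)$ is empty, a single slope, or infinite. Hence (ii) forces $\mathcal{L}(M)$ to be infinite, so infinitely many $\alpha$ yield $L$-spaces $M(\alpha)$; discarding finitely many of these, we may also assume each such $M(\alpha)$ is hyperbolic and a rational homology sphere.

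For the asymmetry step, by standard Dehn-filling geometry (the Neumann--Zagier asymptotics) the core geodesic $\gamma_\alpha$ of the filling solid torus is, for all but finitely many $\alpha$, the \emph{unique} shortest geodesic of $M(\alpha)$, so every self-isometry of $M(\alpha)$ fixes $\gamma_\alpha$ setwise. Drilling $\gamma_\alpha$ and invoking Mostow rigidity then identifies $\mathrm{Isom}\bigl(M(\alpha)\bigr)$ with a subgroup of the stabilizer of the slope $\alpha$ inside $\mathrm{Isom}(M)$ for the complete structure; since $\mathrm{Isom}(M) = 1$ by (i), we conclude $\mathrm{Isom}\bigl(M(\alpha)\bigr) = 1$. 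Intersecting with the previous paragraph produces infinitely many asymmetric hyperbolic $L$-spaces, and these are pairwise non-homeomorphic for infinitely many $\alpha$ since, for instance, $\lvert \HoneZ{M(\alpha)} \rvert \to \infty$ (note $b_1(M) = 1$, as $M$ has a lens space filling). Finally, a closed asymmetric hyperbolic 3-manifold cannot be a nontrivial regular cover of any 3-manifold---the deck group would act freely by isometries, by Mostow rigidity---nor a nontrivial regular branched cover of a 3-orbifold---the orbifold geometrization theorem would conjugate the deck group to a nontrivial group of isometries; this gives the ``in particular'' clause.

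The main obstacle is exhibiting a manifold $M$ with properties (i) and (ii) and \emph{certifying} them rigorously. Concretely, I would search a census of cusped hyperbolic 3-manifolds for $1$-cusped examples with two lens space fillings, confirm those fillings (via known data on exceptional Dehn fillings, or algorithmic recognition of lens spaces), and then establish (i) by verified computation: first upgrade SnapPy's numerical solution of the gluing equations on, say, the canonical ideal triangulation of $M$ to a solution certified by interval arithmetic, and then rule out, within rigorous error bounds, every candidate combinatorial symmetry of that triangulation. This last point---a rigorous, interval-arithmetic computation of the symmetry group of a cusped hyperbolic 3-manifold, and in particular a certified proof that it has no nontrivial isometries---is the delicate new ingredient, and I expect it to occupy the bulk of the work; iterating the construction over several such $M$ then yields the distinct families advertised.
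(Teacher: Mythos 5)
Your proposal is correct, and its overall architecture is the same as the paper's: reduce to finding a $1$-cusped asymmetric hyperbolic $M$ with (at least) two lens space fillings, get asymmetry of all but finitely many fillings from Thurston's Dehn surgery theorem plus the unique-shortest-core-geodesic/Mostow argument (this is exactly Lemma~\ref{lem:hyperdehn}), and certify the existence of such an $M$ by a census search together with an interval-arithmetic verification that the canonical triangulation has no nontrivial combinatorial automorphisms (Sections~\ref{sect:theory2}--\ref{sect:comp}). The one genuinely different ingredient is the Floer-theoretic step. The paper does not invoke the L-space interval/gluing structure theorem of Rasmussen--Rasmussen and Hanselman--Rasmussen--Watson (which postdates it); instead, Lemma~\ref{lem:floer} uses the elementary Ozsv\'ath--Szab\'o surgery exact triangle (Proposition~\ref{prop:osz}) inductively along the slopes $n\alpha+\beta$. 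To make that work one needs the homological condition (\ref{eq:homcond}) and distance-one slopes, which is why the paper insists the two lens space fillings have \emph{coprime} order: coprimality gives $\HoneZ{N}\cong\Z$ (via half-lives--half-dies), makes $\gamma\mapsto\abs{\HoneZ{N_\gamma}}$ linear on the relevant cone, and feeds into the Cyclic Surgery Theorem to force $\alpha\cdot\beta=1$; as a bonus it also yields the fibering and knot-in-$\Z$HS statements of Theorem~\ref{thm:theory}. Your route buys generality and brevity --- any two distinct L-space slopes on a boundary-incompressible rational homology solid torus give an interval's worth, so no coprimality or Cyclic Surgery Theorem is needed --- at the cost of much heavier machinery; the paper's route is self-contained with 2005-era tools but needs the arithmetic coprimality hypothesis, which its certified examples (Theorems~\ref{thm:comp} and~\ref{thm:link}) are chosen to satisfy. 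Your treatment of the ``in particular'' clause (no nontrivial regular or branched regular covers, via Mostow and orbifold geometrization) supplies an argument the paper leaves implicit, and your distinctness-via-$\abs{H_1}\to\infty$ remark is fine since $b_1(M)=1$.
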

\noindent 
Among $L$-spaces which are \emph{not} double branched covers over
links in $S^3$, hyperbolic examples such as those of
Theorem~\ref{thm:main} are the simplest possible in the sense that any
such $L$-space must have a hyperbolic piece in its prime/JSJ
decomposition.  This is because any graph manifold which is a rational
homology sphere, much less an $L$-space, is a double branched cover
over a link in $S^3$. %, see Proposition~\ref{prop:branch}.
This was proved by Montesinos in
\cite[$\S$7.2]{montesinos1973variedades}; the theorem stated there is
paraphrased %(he refers to graph manifolds as ``Waldhausen manifolds'')
in the translation below: 
\begin{theorem}[{\cite[$\S$7.2]{montesinos1973variedades}}]\label{thm:Montesinos}
  Let $M$ be a graph manifold whose diagram is a tree with each vertex
  corresponding to a Seifert fibered space over a (punctured) $S^2$ or
  (punctured) $\RP^2$.  Then $M$ is a double branched cover of a link
  $L$ in $S^3.$
\end{theorem}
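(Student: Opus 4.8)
My plan follows Montesinos's approach: build an orientation-preserving involution $\tau$ on $M$ out of involutions of the individual Seifert pieces, arrange its quotient to be $S^3$, and take $L$ to be the image of $\mathrm{Fix}(\tau)$. Write $M=\bigcup_v W_v$, where $v$ runs over the vertices of the tree $\mathcal T$, each $W_v$ is the Seifert fibered space attached to $v$, and the $W_v$ are glued along tori indexed by the edges of $\mathcal T$ (any boundary tori left unglued making up $\partial M$). Since $\mathcal T$ is a tree, every edge joins two distinct vertices and no two vertices share an edge twice, so each gluing torus is used exactly once and distinct gluings take place along pairwise disjoint tori.

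The first and most substantial step is a local statement: if $W$ is a compact orientable Seifert fibered space whose base surface is planar (a punctured $S^2$) or a punctured $\RP^2$, with $k\ge 0$ boundary tori, then $W$ admits an orientation-preserving involution $\tau_W$ with $1$-dimensional fixed set which restricts to the elliptic involution on each boundary torus, such that $W/\tau_W$ is $S^3$ with $k$ disjoint open balls removed and $W\to W/\tau_W$ is the double cover branched over the $1$-manifold $t_W=\mathrm{im}(\mathrm{Fix}\,\tau_W)$, where $t_W$ meets each boundary sphere in exactly four points. In the planar case I would construct $\tau_W$ as follows: removing fibered solid-torus neighborhoods of the exceptional fibers leaves a trivial circle bundle $S^1\times P_0$ over a planar surface $P_0$; I would place $P_0$ in the plane, symmetric under a reflection $\sigma$, with all of its holes arranged in a row along the axis of $\sigma$, so that $\mathrm{Fix}(\sigma)$ is a family of properly embedded arcs and $\sigma$ restricts to a reflection on each boundary circle, and then set $\tau_W(z,p)=(\bar z,\sigma(p))$. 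This restricts to the elliptic involution on every boundary torus, and the quotient of $S^1\times P_0$ is $S^3$ with open balls removed, carrying the images of those arcs as a tangle; one then extends $\tau_W$ across each fibered solid torus, where the local quotient is a $3$-ball carrying a rational tangle determined by the Seifert invariant of that exceptional fiber. Reassembling fills the boundary spheres coming from the exceptional fibers and leaves $S^3$ with $k$ open balls removed. I expect this step to be the main obstacle: one has to check that $\tau_W$ extends across the exceptional fibers with the quotient remaining a punctured ball with a tangle, and one has to treat the punctured-$\RP^2$ case by a parallel but separate argument (here $W$ is only a twisted circle bundle over the punctured $\RP^2$ rather than a product). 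It is precisely here that the hypothesis on the base surfaces enters, since over a higher-genus base no such involution need exist.

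Granting the local statement, the rest is formal. Each gluing homeomorphism $\phi_e$ between two boundary tori can be isotoped so that it intertwines the restrictions of the two adjacent involutions $\tau_{W_v}$, because the elliptic involution is central in the mapping class group $GL_2(\Z)$ of the torus --- it acts as $-I$ on first homology --- so every self-homeomorphism of $T^2$ is isotopic to one commuting with a given elliptic involution. As the gluing tori are pairwise disjoint and each is used once, these adjustments are independent and do not change the homeomorphism type of $M$, so the $\tau_{W_v}$ assemble into an orientation-preserving involution $\tau$ of $M$ with $\mathrm{Fix}(\tau)=\bigcup_v\mathrm{Fix}(\tau_{W_v})$, still elliptic on each now-interior gluing torus. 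Passing to quotients, $M/\tau=\bigcup_v\bigl(W_v/\tau_{W_v}\bigr)$ is a union of copies of $S^3$, each with finitely many open balls removed, glued along boundary spheres; because $\mathcal T$ is a tree, each such gluing is a connected sum of two copies of $S^3$ rather than a self-sum, so $M/\tau$ is again $S^3$ with open balls removed, one for each component of $\partial M$. When $M$ is closed --- the case relevant to rational homology spheres and $L$-spaces --- this gives $M/\tau=S^3$. The pieces $t_{W_v}$ join up along their endpoints on the shared spheres, which correspond under $\phi_e$ because $\phi_e$ was made equivariant, into a closed $1$-manifold $L\subset S^3$ --- a link --- and by construction $M\to M/\tau=S^3$ is the double cover branched over $L$, as required. (If $\partial M\neq\emptyset$, one can cap off the boundary spheres of $M/\tau$ with trivially tangled balls to reduce to the closed case.)
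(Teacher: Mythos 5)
The paper does not actually prove this statement---it is quoted from Montesinos \cite[$\S$7.2]{montesinos1973variedades}---so there is no internal proof to compare against; what you have written is essentially a reconstruction of Montesinos's original argument: build a fiber-reflecting involution on each Seifert piece whose quotient is a punctured $S^3$ containing a tangle meeting each boundary sphere in four points, make the gluings equivariant using the fact that the elliptic involution acts as $-I$ and hence is central up to isotopy, and use the tree condition to see that the quotient is a connected sum of copies of $S^3$, i.e.\ $S^3$ itself, with the tangles closing up into the branch link. Your planar-base construction, the centrality argument for equivariant regluing, and the role of the tree hypothesis are all correct; the extension over exceptional solid tori also works, via the standard lemma that any orientation-preserving involution of $\partial V$ acting as $-I$ on homology is conjugate, by a homeomorphism isotopic to the identity, to $(z,w)\mapsto(\bar z,\bar w)$ and therefore extends over the solid torus $V$ with quotient a ball containing a rational tangle---you should state this lemma explicitly rather than gesture at it.

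The one genuine hole is the punctured-$\RP^2$ case, which you defer to ``a parallel but separate argument'' without giving it; since the theorem explicitly allows such vertices, this must be closed. The cheapest way is to reduce it to what you already have: split the base as a M\"obius band (containing no exceptional points) union a planar surface containing all exceptional points and boundary circles. The preimage of the M\"obius band is the orientable twisted $S^1$-bundle over it, which is the orientable twisted $I$-bundle over the Klein bottle and re-fibers as a Seifert fibration over the disk with two exceptional fibers of type $(2,1)$; thus the whole piece is two planar-base Seifert pieces glued along a vertical torus, i.e.\ a two-vertex tree, and your own assembling step (equivariant gluing along an elliptic involution) produces the required involution with quotient a punctured $S^3$ and tangle. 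With that paragraph added, and the solid-torus extension lemma made explicit, your sketch is a complete proof along Montesinos's original lines.
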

\noindent
Note the rational homology sphere assumption implies that the diagram
of the graph manifold is a tree. Also, the cases that arise if the
tree is a just single vertex are covered in
\cite[$\S$2-3]{montesinos1973variedades}.

\begin{figure}
  \begin{center}
   %\definecolor{linkcolor0}{rgb}{0.85, 0.15, 0.15}
%\definecolor{linkcolor1}{rgb}{0.15, 0.15, 0.85}
\definecolor{linkcolor0}{rgb}{0.937, 0.498, 0.514}
\definecolor{linkcolor1}{rgb}{0.082, 0.000, 0.596}
\begin{tikzpicture}[line width=2.0, line cap=round, line join=round, scale=0.8]
  \begin{scope}[color=linkcolor0]
    \draw (8.61, 2.55) .. controls (9.27, 2.55) and (9.82, 3.10) .. 
          (9.82, 3.76) .. controls (9.82, 4.40) and (9.40, 4.97) .. (8.80, 4.97);
    \draw (8.41, 4.97) .. controls (7.96, 4.97) and (7.51, 4.97) .. (7.05, 4.97);
    \draw (7.05, 4.97) .. controls (6.61, 4.97) and (6.16, 4.97) .. (5.71, 4.97);
    \draw (5.32, 4.97) .. controls (4.80, 4.97) and (4.28, 4.97) .. (3.76, 4.97);
    \draw (3.76, 4.97) .. controls (3.43, 4.97) and (3.09, 4.97) .. (2.75, 4.97);
    \draw (2.36, 4.97) .. controls (1.76, 4.97) and (1.34, 4.40) .. (1.34, 3.76);
    \draw (1.34, 3.76) .. controls (1.34, 3.13) and (1.76, 2.55) .. (2.36, 2.55);
    \draw (2.75, 2.55) .. controls (3.02, 2.55) and (3.29, 2.55) .. (3.57, 2.55);
    \draw (3.96, 2.55) .. controls (4.37, 2.55) and (4.78, 2.55) .. (5.18, 2.55);
    \draw (5.18, 2.55) .. controls (6.32, 2.55) and (7.46, 2.55) .. (8.61, 2.55);
  \end{scope}
  \begin{scope}[color=linkcolor1]
    \draw (2.55, 2.55) .. controls (2.55, 1.09) and (4.01, 0.13) .. 
          (5.58, 0.13) .. controls (7.12, 0.13) and (8.61, 0.95) .. (8.61, 2.36);
    \draw (8.61, 2.75) .. controls (8.61, 3.49) and (8.61, 4.23) .. (8.61, 4.97);
    \draw (8.61, 4.97) .. controls (8.61, 5.58) and (8.36, 6.19) .. 
          (7.82, 6.20) .. controls (7.34, 6.20) and (7.05, 5.70) .. (7.05, 5.17);
    \draw (7.06, 4.78) .. controls (7.06, 4.25) and (6.78, 3.74) .. 
          (6.29, 3.74) .. controls (5.76, 3.74) and (5.52, 4.36) .. (5.51, 4.97);
    \draw (5.51, 4.97) .. controls (5.51, 5.59) and (5.19, 6.18) .. 
          (4.63, 6.18) .. controls (4.12, 6.18) and (3.76, 5.71) .. (3.76, 5.17);
    \draw (3.76, 4.78) .. controls (3.76, 4.50) and (3.76, 4.23) .. (3.76, 3.96);
    \draw (3.76, 3.57) .. controls (3.76, 3.23) and (3.76, 2.89) .. (3.76, 2.55);
    \draw (3.76, 2.55) .. controls (3.76, 1.96) and (3.97, 1.35) .. 
          (4.48, 1.35) .. controls (4.94, 1.36) and (5.19, 1.85) .. (5.19, 2.36);
    \draw (5.18, 2.75) .. controls (5.18, 3.40) and (4.48, 3.76) .. (3.76, 3.76);
    \draw (3.76, 3.76) .. controls (3.36, 3.76) and (2.96, 3.76) .. (2.55, 3.76);
    \draw (2.55, 3.76) .. controls (2.22, 3.76) and (1.88, 3.76) .. (1.54, 3.76);
    \draw (1.14, 3.76) .. controls (0.55, 3.76) and (0.13, 4.34) .. 
          (0.13, 4.97) .. controls (0.13, 5.64) and (0.67, 6.18) .. 
          (1.34, 6.18) .. controls (2.01, 6.18) and (2.55, 5.64) .. (2.55, 4.97);
    \draw (2.55, 4.97) .. controls (2.55, 4.64) and (2.55, 4.30) .. (2.55, 3.96);
    \draw (2.55, 3.57) .. controls (2.55, 3.23) and (2.55, 2.89) .. (2.55, 2.55);
  \end{scope}
\end{tikzpicture}
  \end{center}
  \caption{The link used in Theorem~\ref{thm:link} is $L12n1314$ in
    the Hoste-Thistlewaite census. Our framing conventions for Dehn
    filling are
    \protect\raisebox{-0.1cm}{\protect\includegraphics[angle=90,
      scale=0.35]{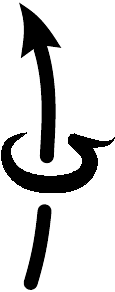}} and are consistent
    with SnapPy \cite{SnapPy}.  Note there is an
    orientation-preserving homeomorphism of $S^3$ which interchanges
    the two components.  }\label{fig:link}
\end{figure}

We prove Theorem~\ref{thm:main} via a combination of hyperbolic
geometry, Heegaard Floer theory, and verified computer calculations.
The proof of Theorem~\ref{thm:main} has two parts, the second of which
is computer-aided.  The first result shows that we need only construct
1-cusped manifolds with certain properties, and the second establishes
the existence of such manifolds.  Here, the \emph{order} of a lens
space is the order of its fundamental group/first homology.

\begin{restatable*}{theorem}{theoremtheory}\label{thm:theory}
  Suppose $M$ is a $1$-cusped hyperbolic \3-manifold.  If $M$ is
  asymmetric and has two lens space Dehn fillings of coprime order,
  then there are infinitely many Dehn fillings of $M$ which are
  asymmetric hyperbolic $L$-spaces. Moreover, $M$ is the complement of
  a knot in an integral homology \3-sphere and fibers over the circle
  with fiber a once-punctured surface.
\end{restatable*}

\begin{restatable*}{theorem}{theoremlink}\label{thm:link}
  There exist infinitely many 1-cusped hyperbolic \3-manifolds which
  are asymmetric and have two lens space fillings of coprime order.
  Specifically, if $N$ is the exterior of the link in
  Figure~\ref{fig:link}, then for all large $k \in \Z$, the $(6k\pm1,
  k)$ Dehn filling on either component of $N$ yields such a manifold.
\end{restatable*}
\noindent
In addition to Theorem~\ref{thm:link}, Theorem~\ref{thm:comp} 
offers a finite number of explicit examples for which the proof is
slightly easier.  A Heegaard diagram of the simplest of these examples
is given in Figure~\ref{fig:v3372}.  

\subsection{Heegaard genus,  Dehn filling, and the Berge conjecture} 
Our second main result answers a question of Gordon
\cite{GordonAIM} regarding the existence of manifolds where multiple
fillings drop the Heegaard genus by more than one:
\begin{corollary}\label{cor:genusdrop}
  There exist infinitely many 1-cusped hyperbolic \3-manifolds of
  Heegaard genus three which admit two distinct lens space fillings.
\end{corollary}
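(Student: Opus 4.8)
The plan is to deduce the corollary from Theorem~\ref{thm:link}, using the asymmetry hypothesis twice. Fix one of the infinitely many $1$-cusped hyperbolic manifolds $M$ supplied by Theorem~\ref{thm:link}: it is asymmetric and has lens space fillings $M(\gamma_1), M(\gamma_2)$ along two distinct slopes, of coprime order, and since the orders are coprime these two lens spaces are distinct (their first homology groups have different orders, and they cannot both be trivial, as a hyperbolic $1$-cusped manifold has at most one $S^3$ filling). So it remains only to prove that the Heegaard genus $g(M)$ equals $3$.

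I would first dispose of the easy bounds. For $g(M)\ge 2$: a $1$-cusped $3$-manifold of Heegaard genus at most $1$ is a solid torus glued to a genus-$1$ compression body whose negative boundary is a torus, but the only such compression body is the product $T^2\times I$, so the manifold is a solid torus, which is not hyperbolic. For $g(M)\le 3$: every $M$ occurring in Theorem~\ref{thm:link} is a Dehn filling of the exterior $N$ of the link in Figure~\ref{fig:link}, so it suffices to exhibit a genus-$3$ Heegaard splitting of $N$ (produced directly, or certified by the sort of computation used elsewhere in the paper); since Dehn filling never raises Heegaard genus, $g(M)\le g(N)\le 3$.

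The substance is ruling out $g(M)=2$, and here the asymmetry is used again. Suppose $M = H\cup_\Sigma C$ is a genus-$2$ Heegaard splitting, with $H$ a genus-$2$ handlebody, $\Sigma\cong\Sigma_2$, and $C$ the genus-$2$ compression body whose negative boundary is the cusp torus. The hyperelliptic involution $\tau$ of $\Sigma_2$ is central in $\mathrm{MCG}(\Sigma_2)$; consequently it preserves every isotopy class of simple closed curve and, after an isotopy making its boundary restriction literally $\tau$, extends over \emph{every} genus-$2$ handlebody as well as over the (unique) genus-$2$ compression body over a torus. This is exactly the mechanism behind the classical fact that closed $3$-manifolds of Heegaard genus at most $2$ are double branched covers of $S^3$. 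Applying it to $H$ and to $C$ and gluing along $\Sigma$ yields an involution $\hat\tau$ of $M$; by construction $\hat\tau$ preserves $H$ and $C$, so it is orientation-preserving, and it is nontrivial since $\hat\tau|_\Sigma = \tau$ has six fixed points. By the geometrization of finite group actions on hyperbolic $3$-manifolds, $\hat\tau$ is conjugate to an isometry $\phi$ of $M$; since $\hat\tau$ is not the identity, neither is $\phi$, contradicting the asymmetry of $M$. Hence $g(M) = 3$, and letting $M$ range over the infinitely many manifolds of Theorem~\ref{thm:link} proves the corollary.

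The step I expect to require genuine care is the extension claim in the genus-$2$ argument: checking that $\tau$ extends over the compression body $C$ with boundary restriction equal to $\tau$---using that $\tau$ is central, that it preserves the defining curve of $C$ up to isotopy, and that the genus-$2$ compression body over a torus is unique---and then verifying, according as $\hat\tau$ restricts on the cusp to the elliptic involution or acts freely there, that the fixed set of $\hat\tau$ is a nonempty proper subset of $M$, so that the isometry to which it is conjugate is visibly nontrivial. The remaining ingredients---that Dehn filling does not raise Heegaard genus, that a solid torus is not hyperbolic, and that coprime-order lens spaces are non-homeomorphic---are routine.
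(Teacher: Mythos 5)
Your proposal follows essentially the same route as the paper: the corollary is deduced from Theorem~\ref{thm:link}, the genus-$2$ case is excluded because a genus-$2$ splitting would force a nontrivial involution (your hyperelliptic-involution construction is exactly the folklore fact the paper invokes when it says ``manifolds with genus two Heegaard splittings always have symmetries''), and the upper bound comes from a genus-$3$ splitting of the link exterior $N$ together with the fact that Dehn filling does not raise Heegaard genus. The detailed parts of your argument are sound, including the care about making the two extensions of $\tau$ agree on $\Sigma$ and the use of geometrization of finite group actions to convert the nontrivial involution into a nontrivial isometry contradicting asymmetry. The one step you leave as a promissory note --- ``exhibit a genus-$3$ Heegaard splitting of $N$ (produced directly, or certified by the sort of computation used elsewhere in the paper)'' --- is where the paper supplies the missing ingredient: the link $L12n1314$ in Figure~\ref{fig:link} is $3$-bridge, and a $b$-bridge position of a link immediately yields a genus-$b$ Heegaard splitting of its exterior, so $g(N)\le 3$ without any computation. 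With that observation inserted, your argument is complete; the extra remarks about the two lens spaces being non-homeomorphic are fine but not needed, since ``two distinct lens space fillings'' refers to distinct filling slopes, which Theorem~\ref{thm:link} already provides.
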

\noindent
This corollary follows immediately from Theorem~\ref{thm:link}, as
manifolds with genus two Heegaard splittings always have symmetries; 
 the examples of Theorem~\ref{thm:link} must have Heegaard
genus exactly three since the link in Figure~\ref{fig:link} is
\3-bridge. 

The interest in $L$-spaces stems in part from open questions about
lens space surgery, with the Berge Conjecture as the chief example.
Another interesting feature of Corollary~\ref{cor:genusdrop} is that
it provides counterexamples to the following generalization of the
Berge Conjecture, since the exterior of any $(1,1)$--knot has Heegaard
genus two:

\begin{conjecture}[{\cite[Conjecture 9]{BakerDoleshalHoffman}}] 
  If knots $K_1 \subset L(p_1, q_1)$ and $K_2 \subset L(p_2,q_2)$ are
  longitudinal surgery duals, then up to reindexing, $K_2$ is a
  $(1,1)$--knot and $p_2 \geq p_1$.
\end{conjecture}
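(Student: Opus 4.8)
The final displayed statement is a \emph{conjecture}, and the sentence introducing it already announces that this paper \emph{disproves} it; so rather than a proof I will sketch how to extract explicit counterexamples from the results stated above. The mechanism rests on a standard dictionary: a $1$-cusped $3$-manifold $M$ equipped with two distinct lens space Dehn fillings is exactly a pair of surgery dual knots $K_1 \subset L(p_1,q_1)$ and $K_2 \subset L(p_2,q_2)$ with common exterior $M$, where $K_i$ is the core of the solid torus added to produce the $i$-th lens space and the surgery carrying one pair to the other is recorded by the two filling slopes on $\partial M$. In particular the Heegaard genus of each exterior $L(p_i,q_i)\setminus K_i$ equals the Heegaard genus of $M$, while a $(1,1)$-knot exterior --- being of tunnel number one --- has Heegaard genus at most two. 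Hence any surgery dual pair whose common exterior has Heegaard genus exactly three violates the conjecture, since the conjecture's conclusion would force one of $K_1,K_2$ to be a $(1,1)$-knot.

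The plan is then: first, apply Theorem~\ref{thm:link} to obtain, for each large $k$, a $1$-cusped hyperbolic asymmetric manifold $M=M_k$ (the $(6k\pm1,k)$ filling on one component of the link $N$ of Figure~\ref{fig:link}) with two lens space fillings $L(p_1,q_1)$ and $L(p_2,q_2)$ of coprime order. Second, let $K_i\subset L(p_i,q_i)$ be the core of the corresponding filling torus, so that $K_1$ and $K_2$ are longitudinal surgery duals with common exterior $M$; here I would verify that the two filling slopes are genuinely the ``longitudinal'' ones demanded by the conjecture, which is where one uses $\gcd(p_1,p_2)=1$ together with the fact from Theorem~\ref{thm:theory} that $M$ is a knot complement in an integral homology $3$-sphere. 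Third, invoke Corollary~\ref{cor:genusdrop}: each such $M$ has Heegaard genus exactly three (upper bound because the link in Figure~\ref{fig:link} is $3$-bridge, lower bound because an asymmetric manifold has no genus-two Heegaard splitting). Fourth, conclude that $M$ is not homeomorphic to the exterior of any $(1,1)$-knot in any lens space, so neither $K_1$ nor $K_2$ is a $(1,1)$-knot; since the conjecture's conclusion forces one of the two (after any reindexing) to be a $(1,1)$-knot, it fails for $(K_1,K_2)$. Finally, since the $M_k$ are pairwise non-homeomorphic for distinct $k$, this yields infinitely many counterexamples.

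Nearly everything here is imported from the statements above, so the one place I expect to need real care is the second step: confirming that the pairs $(K_1,K_2)$ produced by Theorem~\ref{thm:link} actually satisfy the \emph{longitudinal} surgery dual hypothesis of the conjecture rather than a weaker surgery relation --- for if they did not, they would simply not be counterexamples. Concretely this means matching the framing/slope conventions of Figure~\ref{fig:link} against the definition of ``longitudinal surgery dual'' and using the coprimality of $p_1$ and $p_2$ to see, for instance, that each $K_i$ generates $H_1(L(p_i,q_i))$ and that the dual surgery slope meets its meridian once. By contrast, the Heegaard genus input and the tunnel-number-one bound for $(1,1)$-knot exteriors are standard, and hyperbolicity of $M$ disposes of any degenerate case (e.g.\ $M$ a solid torus or a lens space itself).
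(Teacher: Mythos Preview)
Your approach is correct and matches the paper's, which disposes of the conjecture in a single sentence: Corollary~\ref{cor:genusdrop} supplies genus-three exteriors with two lens space fillings, while any $(1,1)$--knot exterior has Heegaard genus at most two. The one point you flagged---verifying that the duality is \emph{longitudinal}---is handled by the Cyclic Surgery Theorem \cite{CGLS} (already invoked in the proof of Lemma~\ref{lem:floer}): since each $M_k$ is hyperbolic, its two lens space slopes are at distance one, so each is a longitude for the other.
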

\noindent
We note that these examples do
not contradict the Berge Conjecture itself because they are not
knot complements in $S^3$; see the proof of Theorem~\ref{thm:link} for details.

\subsection{Certifying symmetry groups}

The hard part of proving Theorems~\ref{thm:comp} and \ref{thm:link} is
determining the symmetry groups of 23 cusped hyperbolic \3-manifolds,
in particular, showing that they are asymmetric.  Following Weeks and
collaborators \cite{HenryWeeks1992, Weeks1993convex,
  HodgsonWeeks1994}, we do this by using the Epstein-Penner canonical
cellulation; the symmetry group agrees with the combinatorial
isomorphisms of this cellulation.  For each manifold, we give a
rigorous computer-assisted proof that a certain triangulation is the
canonical cellulation.  We build on the verified computation scheme of
\cite{hikmot2013verified} for proving the existence of hyperbolic
structures.  This scheme replaces floating-point computations subject
to various kinds of errors with interval arithmetic in order to meet
the traditional standards of rigorous proof.  Our method for
certifying a triangulation as canonical is described in detail in
Section~\ref{sect:theory2} and is not specific to the examples here.
In addition, the proofs of Theorems~\ref{thm:comp} and \ref{thm:link}
employ SnapPy \cite{SnapPy} to perform combinatorial computations.
Both SnapPy and the code for \cite{hikmot2013verified} are freely
available; the source code and data files used in the
computer-assisted proofs in this paper are permanently archived at
\cite{ancillary}.

As further context for Theorems~\ref{thm:comp} and \ref{thm:link}, we
note that in general it is quite difficult to show a particular
3-manifold is asymmetric.  Most proofs that \emph{specific} hyperbolic
3-manifolds are asymmetric hinge on computing a hyperbolic invariant
which is not preserved by any possible isometry; see for example the
delicate arguments in \cite{Riley}.  One notable exception is the case
of complements to certain arborescent knots
\cite{bonahonSiebenmann2010}; since knots are determined by their
complements \cite{GordonLuecke1989}, the symmetry group of a knot
complement is the same as that of the pair $(S^3, K)$, where
additional tools apply.  As the referee pointed out to us, the link
$L$ in Theorem~\ref{thm:link} is Montesinos and the symmetry group
of $(S^3, L)$ can be computed by \cite{BoileauZimmermann1987}.  While
this is less information than the symmetry group of the exterior of
$L$, it is possible to leverage this fact to a computer-free proof of
Theorem~\ref{thm:link} and hence Theorem~\ref{thm:main} and
Corollary~\ref{cor:genusdrop}; see Remark~\ref{rem:referee} for
details.  However, this alternative approach does not extend to the
specific examples in Theorem~\ref{thm:comp} of asymmetric 1-cusped
manifolds.

\subsection{Acknowledgements} 
The authors gratefully thank Ken Baker, Francis Bonahon, Craig
Hodgson, Adam Levine, and Jessica Purcell for many helpful
conversations. Dunfield was partially supported by US NSF grant
\#DMS-1106476 and a Simons Fellowship; this work was partially done
while he was visiting the University of Melbourne. Hoffman was
supported by the ARC Grant DP130103694 and thanks the Mathematical
Sciences Institute at the Australian National University for hosting
him during part of this work.  We also thank the referee for their
very helpful comments, especially for pointing us to the work of
\cite{BoileauZimmermann1987} which is discussed in
Remark~\ref{rem:referee}.

\section{Asymmetric L-spaces from cusped manifolds}

This section is devoted to the proof of the following result:
\theoremtheory
\noindent
This theorem follows immediately from the next two lemmas, where in
the second one we set $N \setminus \partial N \cong M$. 

\begin{lemma}\label{lem:hyperdehn}
  Suppose $M$ is an asymmetric 1-cusped hyperbolic \3-manifold.  Then
  all but finitely many Dehn fillings of $M$ are hyperbolic and asymmetric.
\end{lemma}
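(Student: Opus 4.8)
The plan is to combine Thurston's hyperbolic Dehn surgery theorem with Mostow rigidity. Write $M(\gamma)$ for the Dehn filling of $M$ along a slope $\gamma$. First I would invoke Thurston's theorem: all but finitely many $M(\gamma)$ are hyperbolic, and as $\gamma$ leaves every finite set of slopes the manifolds $M(\gamma)$ converge geometrically to $M$. In particular, for all but finitely many slopes the core $\gamma^{\ast}$ of the filling solid torus is isotopic to a closed geodesic whose length tends to $0$, while geometric convergence forces every \emph{other} closed geodesic of $M(\gamma)$ to have length bounded below by a constant tending to the length of the shortest geodesic of $M$, which is positive. Hence, after discarding finitely many more slopes, $\gamma^{\ast}$ is the \emph{unique} shortest closed geodesic of $M(\gamma)$; I restrict attention to such $\gamma$.

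Next, let $\phi$ be any self-isometry of $M(\gamma)$. Since $\phi$ permutes the closed geodesics of each fixed length and $\gamma^{\ast}$ is the unique shortest one, $\phi$ fixes $\gamma^{\ast}$ setwise, hence restricts to a self-diffeomorphism $\psi$ of the open manifold $M(\gamma)\setminus\gamma^{\ast}$, which is diffeomorphic to the interior of $M$. By Mostow rigidity together with the hypothesis that $M$ is asymmetric --- equivalently, by the theorem of Gabai quoted above --- the diffeomorphism $\psi$ is isotopic to the identity, so it acts on $\pi_1\bigl(M(\gamma)\setminus\gamma^{\ast}\bigr)\cong\pi_1(M)$ by an inner automorphism. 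The inclusion $M(\gamma)\setminus\gamma^{\ast}\hookrightarrow M(\gamma)$ induces a surjection on fundamental groups and, because $\phi$ preserves $\gamma^{\ast}$, intertwines $\psi_{\ast}$ and $\phi_{\ast}$; it follows that $\phi$ also acts on $\pi_1\bigl(M(\gamma)\bigr)$ by an inner automorphism. Since $M(\gamma)$ is a closed hyperbolic \3-manifold, the natural homomorphism from its isometry group to $\mathrm{Out}\bigl(\pi_1(M(\gamma))\bigr)$ is injective, so $\phi=\mathrm{id}$. Thus every such $M(\gamma)$ is asymmetric, completing the proof.

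The step I expect to be the main obstacle is the second half of the first paragraph: showing that the core geodesic is not merely short but is \emph{globally} the unique shortest geodesic for all but finitely many slopes. This requires using the geometric convergence $M(\gamma)\to M$ to control the entire length spectrum of $M(\gamma)$ outside a Margulis tube about $\gamma^{\ast}$, rather than just the length of $\gamma^{\ast}$ itself. Given that, the rest is a routine combination of Mostow rigidity, Gabai's theorem, and the surjectivity of $\pi_1(M)\to\pi_1(M(\gamma))$.
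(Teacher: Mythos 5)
Your proposal is correct and follows essentially the same route as the paper: use Thurston's hyperbolic Dehn surgery theorem to make the core of the filling solid torus the unique shortest closed geodesic for all but finitely many slopes, so that any isometry of the filled manifold preserves the core and restricts to the asymmetric manifold $M$. The only difference is that you spell out the final step (that a nontrivial isometry of the closed filling cannot restrict to a trivial symmetry of $M$) via the injection of the isometry group into $\mathrm{Out}(\pi_1)$, a detail the paper leaves implicit.
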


\begin{lemma}\label{lem:floer}
  Suppose $N$ is a compact \3-manifold with $\partial N$ a torus.  If
  $N$ has two lens space Dehn fillings of 
  coprime order, then $N$ has infinitely many Dehn fillings which are
  $L$-spaces.  Moreover, $N$ is the exterior of a knot in an integral
  homology sphere and fibers over the circle with fiber a surface with
  one boundary component.
\end{lemma}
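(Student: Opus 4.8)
The plan is to separate the homological content of the lemma from its Heegaard--Floer content. First I would determine $H_1(N)$. Since $\partial N$ is a torus, ``half lives, half dies'' gives $b_1(N) \ge 1$, and if $b_1(N) \ge 2$ then every Dehn filling of $N$ would have infinite first homology and hence could not be a lens space; so $b_1(N) = 1$ and $H_1(N;\Z) \cong \Z \oplus T$ with $T$ finite. Recall that $N$ has a distinguished \emph{rational longitude} $\lambda_N$, the unique slope on $\partial N$ whose image in $H_1(N)$ has finite order, and that a routine homological computation produces a constant $c_N$, depending only on $N$, with $|H_1(N(\gamma))| = c_N \cdot \Delta(\gamma,\lambda_N)$ for every slope $\gamma \ne \lambda_N$. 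Applying this to the two lens space filling slopes $\gamma_1, \gamma_2$ shows that $c_N$ divides both orders $p_1$ and $p_2$, so the coprimality hypothesis forces $c_N = 1$. Consequently any slope $\gamma_0$ with $\Delta(\gamma_0,\lambda_N) = 1$ --- and there are infinitely many such --- yields $|H_1(N(\gamma_0))| = 1$, so $Y := N(\gamma_0)$ is an integral homology sphere and $N = Y \setminus \nu(K)$, where $K$ is the core of the filling solid torus. As knot exteriors in integral homology spheres have first homology $\Z$, generated by a meridian, this identifies $N$ as the exterior of a knot $K$ in a $\Z$-homology sphere and records that $K$ is null-homologous.

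Second, I would feed the two lens space fillings into the Floer-theoretic machinery, after first noting that $N$ is irreducible: lens spaces are irreducible, while a reducible 3-manifold with torus boundary admits at most one lens space Dehn filling (as one sees from its prime decomposition together with the Gordon--Luecke theorem), which is incompatible with having two distinct lens space slopes. Each lens space filling is an $L$-space filling, so, after fixing orientations, $K$ is an $L$-space knot in $Y$. I would then invoke two standard consequences. First, by the work of Ozsv\'ath and Szab\'o on the knot Floer homology of knots admitting $L$-space surgeries, together with Ni's theorem that knot Floer homology detects fiberedness, $K$ is a fibered knot; hence $N$ fibers over the circle with fiber a Seifert surface for $K$, which has a single boundary component. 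Second, the existence of an $L$-space surgery slope forces all sufficiently large surgery slopes on $K$ to be $L$-space slopes (Ozsv\'ath--Szab\'o, Rasmussen); alternatively, one may use that the set of $L$-space filling slopes of $N$ is a closed interval, here non-degenerate since it contains the distinct slopes $\gamma_1$ and $\gamma_2$. Either way, $N$ has infinitely many $L$-space Dehn fillings.

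The homological first step is routine; the real weight of the argument is the second step, and the main obstacle is citing and applying the deep Floer-theoretic input correctly --- that an $L$-space surgery on a null-homologous knot in an integral homology sphere forces the knot to be fibered and propagates to infinitely many $L$-space slopes. A handful of minor points also need attention: checking that $\gamma_1 \ne \gamma_2$ (immediate from $p_1 \ne p_2$, except in the degenerate case where both orders equal $1$, which is then handled directly), verifying the irreducibility of $N$ so that the fiberedness criterion applies, and tracking orientations so that ``$L$-space surgery'' is used in the sense demanded by the propagation result.
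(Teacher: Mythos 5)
Your homological step is sound and essentially equivalent to the paper's (the paper deduces $H_1(N)\cong\Z$ from surjectivity of $\HoneZbdryN \to \HoneZ{N}$ together with half-lives--half-dies; your rational-longitude computation reaches the same conclusion), and your second route to ``infinitely many $L$-space fillings'' --- two distinct $L$-space slopes force the set of $L$-space slopes to be a nondegenerate interval --- is a legitimate, if heavier, alternative to the paper's argument, which instead invokes the Cyclic Surgery Theorem \cite{CGLS} to get $\Delta(\alpha,\beta)=1$ and then iterates the surgery-triangle statement of Proposition~\ref{prop:osz} over the cone spanned by $\alpha$ and $\beta$.

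The genuine gap is in the fibering step. The implication you invoke --- ``an $L$-space surgery on a null-homologous knot in an integral homology sphere forces the knot to be fibered'' --- is false as stated, and so is your first propagation alternative that one $L$-space slope forces all large slopes to be $L$-space slopes; both are theorems only when the ambient homology sphere is itself an $L$-space (e.g.\ $S^3$). The proofs use $\mathrm{rank}\,\widehat{HF}(S^3)=1$ through the large-surgery formula; if $\mathrm{rank}\,\widehat{HF}(Y)>1$, then large surgeries on $K\subset Y$ have rank roughly $n\cdot\mathrm{rank}\,\widehat{HF}(Y)>n$, so they are never $L$-spaces, and nothing forces the top Alexander grading of $\widehat{HFK}(Y,K)$ to have rank one. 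Concretely: take any primitive, non-fibered knot $\kappa$ in $L(5,1)$ whose exterior has $H_1\cong\Z$ (for instance, connect-sum a core of a Heegaard solid torus with a non-fibered knot in $S^3$); filling the exterior along a slope meeting the rational longitude once produces an integral homology sphere $Y$ in which the core $K$ is a null-homologous knot admitting a lens space surgery, yet its exterior does not fiber. So a single $L$-space surgery on $K\subset Y$ proves nothing here; the second lens space filling must be used. That is exactly how the paper proceeds: it observes that $N$ is the exterior of \emph{primitive} knots in both lens spaces $\Na$ and $\Nb$ and cites Theorem~6.5 of \cite{BBCW2012}, whose hypothesis is precisely a primitive knot in a lens space whose exterior admits a second lens space filling (one could instead argue via Floer simplicity of $N$, but some use of the second filling is indispensable). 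Until you replace your single-surgery shortcut by such an argument, the fibering conclusion --- and with it part of the lemma --- is not established.
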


The proofs of these two lemmas are completely independent and will be
familiar to experts in the areas of \3-dimensional hyperbolic geometry and
Heegaard Floer theory, respectively.

\begin{proof}[Proof of Lemma~\ref{lem:hyperdehn}]
  Our argument here is motivated by \cite{HodgsonWeeks1994}, which
  contains additional details.  The key geometric claim is that, for
  all but finitely many slopes $\alpha$, the Dehn filled manifold
  $M_{\alpha}$ is hyperbolic with the core $c$ of the added solid
  torus being the \emph{unique} shortest closed geodesic in
  $M_{\alpha}$. Since $c$ is the unique geodesic of its length, any
  isometry of $M_{\alpha}$ must send $c$ to itself, setwise if not
  pointwise.  Any isometry of $M_{\alpha}$ thus induces a
  self-diffeomorphism of $M$.  Any symmetry of $M_{\alpha}$ would thus
  give one of the asymmetric manifold $M$, and so $M_{\alpha}$ 
  must also be asymmetric, as desired.

  The geometric claim follows from the proof of the Hyperbolic Dehn Surgery
  Theorem \cite[Theorem 5.8.2]{ThurstonNotes1979} as we now explain.
  Thurston showed that all but finitely many Dehn fillings on $M$
  give closed hyperbolic \3-manifolds whose geometry is very close to
  that of $M$ outside the core curves of the filling solid tori; for further background see \cite[$\S$4.6-4.8]{ThurstonNotes1979}.
  Specifically, for any fixed $\epsilon > 0$, after excluding finitely
  many slopes $\alpha$, we can assume that $M_\alpha$ is hyperbolic
  with the core curve $c$ being a geodesic of length less than
  $\epsilon$ which lives inside a very deep tube whose complement is $(1 +
  \epsilon)$--bi-Lipschitz to a fixed compact subset of $M$. Taking
  $\epsilon$ much smaller than the length of the shortest closed
  geodesic in $M$, it follows that $c$ is the \emph{unique} shortest
  closed geodesic in $M_{\alpha}$. This establishes the geometric
  claim and hence the lemma.
\end{proof}

\begin{proof}[Proof of Lemma~\ref{lem:floer}]  
  We first show that $N$ is the exterior of a knot in an integral homology sphere.  Let
  $\alpha$ and $\beta$ be the given slopes where $\Na$ and $\Nb$ are lens
  spaces.  Since $\HoneZ{\Na} = \HoneZ{N}\big/\langle \alpha \rangle$
  and $\HoneZ{\Nb} = \HoneZ{N}\big/\langle \beta \rangle$ are cyclic
  of coprime order, it follows that $\HoneZ{N}\big/\langle \alpha,
  \beta \rangle$ is trivial and hence that $\HoneZbdryN \to \HoneZ{N}$ is
  surjective; combining this with ``half-lives, half-dies'' for
  $H_1(\partial N; \F_p) \to H_1(N; \F_p)$ for every prime $p$, it
  follows that $\HoneZ{N} \cong \Z$.  Let $\mu \in \HoneZbdryN$ be any
  primitive element whose image generates $\HoneZ{N}$.  Then $N_\mu$
  is an integral homology sphere as desired.
  
  A knot $K$ in a lens space $L$ is \textit{primitive} if $[K]$ generates
  $\HoneZ{L}$.  Since $\HoneZbdryN$ surjects onto $\HoneZ{N}$, it follows
  that $N$ is the exterior of primitive knots in $\Na$ and $\Nb$; Theorem
  6.5 of \cite{BBCW2012} then implies that $N$ fibers over the
  circle. An easy consequence of the surjectivity of $\HoneZbdryN \to
  \HoneZ{N}$ is that the fiber has only one boundary component.

  It remains to show that $N$ has infinitely many $L$-space fillings,
  which is a standard consequence of the exact triangle in
  Heegaard Floer homology, specifically:

  \begin{proposition}[{\cite[Prop 2.1]{OzsvathSzabo2005}}]
  \label{prop:osz} 
    Suppose $\{\eta, \nu\}$ are a basis for $\HoneZbdryN$ and $N_\eta$,
    $N_\nu$, and $N_{\eta + \nu}$ are all rational homology spheres with
    \begin{equation}\label{eq:homcond}
    \abs{\HoneZ{N_{\eta+\nu}}}= \abs{\HoneZ{N_\eta}}+\abs{\HoneZ{N_{\nu}}\vphantom{\big|}}.
    \end{equation}
    If $N_\eta$ and $N_\nu$ are $L$-spaces, so is $N_{\eta + \mu}$.
\end{proposition}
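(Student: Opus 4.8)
\emph{Proof sketch.} The plan is to deduce this from the surgery exact triangle in Heegaard Floer homology (which I take as given, with coefficients in $\F$) together with the universal lower bound $\mathrm{rk}\,\HFhat(Y) \ge \abs{\HoneZ{Y}}$ for rational homology spheres $Y$ recalled in the introduction, where $\mathrm{rk}$ denotes the rank of $\HFhat$. Since $\{\eta,\nu\}$ is a basis for $\HoneZbdryN$, the slopes $\eta$, $\nu$, and $\eta+\nu$ pairwise intersect once on $\partial N$; equivalently, letting $K$ be the core of the filling solid torus in $N_\eta$ and using $\nu$ as a framing of $K$, the manifolds $N_\nu$ and $N_{\eta+\nu}$ are, up to the standard sign conventions, the $0$- and $1$-framed surgeries on $K\subset N_\eta$. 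These three closed manifolds therefore form a surgery triad, so the Ozsv\'ath--Szab\'o exact triangle furnishes a long exact sequence
\[ \cdots \to \HFhat(N_\eta) \to \HFhat(N_\nu) \to \HFhat(N_{\eta+\nu}) \to \HFhat(N_\eta) \to \cdots \]

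With this in hand the conclusion is a rank count. Exactness forces the rank of each of the three groups to be at most the sum of the ranks of its two neighbors, so in particular
\[ \mathrm{rk}\,\HFhat(N_{\eta+\nu}) \le \mathrm{rk}\,\HFhat(N_\eta) + \mathrm{rk}\,\HFhat(N_\nu). \]
Since $N_\eta$ and $N_\nu$ are $L$-spaces, the right-hand side equals $\abs{\HoneZ{N_\eta}} + \abs{\HoneZ{N_\nu}}$, which by the homological hypothesis~(\ref{eq:homcond}) equals $\abs{\HoneZ{N_{\eta+\nu}}}$. As $N_{\eta+\nu}$ is a rational homology sphere, the universal lower bound applies to it, and comparing the two estimates gives $\mathrm{rk}\,\HFhat(N_{\eta+\nu}) = \abs{\HoneZ{N_{\eta+\nu}}}$, i.e.\ $N_{\eta+\nu}$ is an $L$-space.

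The only step that takes any care is the bookkeeping for the exact triangle: one must check that it is the filling along $\eta+\nu$, rather than along $\eta$ or $\nu$, whose rank gets bounded above, and that the orientations of $N_\eta$, $N_\nu$, $N_{\eta+\nu}$ are compatible with the conventions under which \cite{OzsvathSzabo2005} states the triangle. Since the triangle is symmetric under cyclic permutation of the three slopes of a triad, and $\{\eta,\nu,\eta+\nu\}$ is manifestly a triad whenever $\{\eta,\nu\}$ is a basis, the displayed rank inequality in fact holds for each of the three fillings with the other two on the right, so this point is routine. Finally, it is worth recording why hypothesis~(\ref{eq:homcond}) is present at all: the exact triangle by itself only yields $\mathrm{rk}\,\HFhat(N_{\eta+\nu}) \le \abs{\HoneZ{N_\eta}} + \abs{\HoneZ{N_\nu}}$, and it is exactly the (non-symmetric) relation~(\ref{eq:homcond}) that singles out $\eta+\nu$ as the slope for which this estimate is sharp.
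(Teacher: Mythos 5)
Correct. The paper itself gives no argument here—it quotes this result directly from \cite[Prop.\ 2.1]{OzsvathSzabo2005}—and your reconstruction (the surgery exact triangle for the triad $\{\eta,\nu,\eta+\nu\}$, the rank inequality from exactness, and the lower bound $\mathrm{rk}\,\HFhat(Y)\geq\abs{\HoneZ{Y}}$ combined with (\ref{eq:homcond})) is precisely the standard proof in that reference, so there is nothing to add.
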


\noindent
As elements of $\HoneZbdryN$, orient $\alpha$ and $\beta$ so that the
cone $C = \setdef{ a \alpha + b \beta }{a, b \in \Z_{>0}}$ is disjoint
from the kernel of $\HoneZbdryN \to \HoneZ{N}$.  It is enough to show
that every primitive lattice point in $C$ corresponds to an $L$-space
filling.  Notice first that the homological picture of $(N, \partial
N)$ developed above means that the map $C \to \N$ which sends $\gamma \to
\abs{\HoneZ{N_\gamma}}$ is the restriction of a linear function.  In
particular, condition (\ref{eq:homcond}) will always hold on $C$.  By
the Cyclic Surgery Theorem \cite{CGLS}, the geometric intersection
number $\alpha \cdot \beta$ is $1$, and hence we may apply
Proposition~\ref{prop:osz} to see that $N_{\alpha + \beta}$ is an
$L$-space.  Repeating this argument inductively with the basis
$\pair{n \alpha + \beta, \ \alpha}$ yields an infinite collection of
$L$-space fillings on $N$, proving the lemma.  One can extend this to
all primitive vectors in $C$ with a little more thought, and a complete
answer to which $N_\eta$ are $L$-spaces is given in
\cite{Rasmussen2007}.
\end{proof}

\section{Certifying canonical triangulations}\label{sect:theory2}

Triangulations are a basic tool in \3-manifold topology, especially 
its algorithmic and computational aspects, and the use of ideal
triangulations to study hyperbolic structures on \3-manifolds goes
back to Thurston \cite{ThurstonNotes1979}.  Although every manifold
has infinitely many triangulations, a cusped hyperbolic \3-manifold $M$
has a unique \emph{canonical ideal cellulation} which is defined
solely in terms of its geometry.  Generically ---including in all the
examples here--- this cellulation is an ideal triangulation, called the
\emph{canonical triangulation}. 

Introduced by Epstein and Penner \cite{EpsteinPenner1988}, the
canonical cellulation is defined by first embedding the universal
cover $\H^3$ of $M$ into $(3+1)$-dimensional Minkowski space.  Choose
disjoint horotorus neighborhoods of each cusp in $M$ which all have
the same volume.  Upstairs in $\H^3$, these neighborhoods lift
to a $\pi_1(M)$--invariant packing of horoballs.  In the Minkowski
model, each horoball $B$ has a corresponding lightcone vector $v_B$,
where $B = \setdef{ w \in \H^3 }{v_B \cdot w \leq -1}$.  The convex
hull of the lightcone vectors associated to the set of cusps has a
natural cellulation of its boundary, and  projecting this radially defines a cellulation
of $\H^3$.  Since this cellulation is preserved both by the action of $\pi_1(M)$
and also by the lifts of isometries of $M$, it descends to a
cellulation of $M$ which is preserved by its isometry group; in
particular, we get the following key tool:

\begin{corollary}[{\cite{HenryWeeks1992}}]\label{cor:asym}
  The elements of the isometry group of $M$ correspond precisely to
  the combinatorial isomorphisms of its canonical cellulation.  In
  particular, if the canonical cellulation has no nontrivial
  combinatorial isomorphisms, then $M$ is asymmetric.
\end{corollary}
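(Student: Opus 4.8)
The plan is to construct the natural homomorphism $\rho\colon \mathrm{Isom}(M)\to\mathrm{Aut}(\mathcal C)$, where $\mathcal C$ denotes the canonical cellulation, sending an isometry to the combinatorial automorphism it induces, and to prove that $\rho$ is a bijection; the final sentence of the corollary is then immediate from injectivity of $\rho$. The map itself is essentially already built in the paragraph preceding the statement: an isometry $f$ of $M$ permutes the cusps and preserves their volumes, so it carries the chosen family of equal-volume horotorus neighborhoods to itself; a lift $\tilde f\in O^+(3,1)$ of $f$ normalizes $\pi_1(M)$ and preserves the resulting $\pi_1(M)$-invariant horoball packing, hence permutes the associated lightcone vectors, hence preserves their Minkowski convex hull and the induced cellulation of its boundary, hence the radial cellulation $\tilde{\mathcal C}$ of $\H^3$, and so $f$ descends to a combinatorial automorphism $\rho(f)$ of $\mathcal C$.

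Next I would verify $\rho$ is injective. If $\rho(f)$ is trivial then $f$ fixes every cell of $\mathcal C$ together with each of its vertices, so a lift $\tilde f$ of $f$ fixing a single top-dimensional cell $\tilde\sigma$ of $\tilde{\mathcal C}$ fixes the ideal vertices of $\tilde\sigma$, among which are four in general position in $\partial_\infty\H^3$; hence $\tilde f=\mathrm{id}$ and $f=\mathrm{id}$. This already yields the asymmetry criterion.

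The substance of the corollary is surjectivity of $\rho$, and here the point worth emphasizing is that it is \emph{not} formal: a combinatorial automorphism of $\mathcal C$ carries no metric data --- combinatorial symmetries of polytopes are generally not realized by isometries --- so something must force $\phi\in\mathrm{Aut}(\mathcal C)$ to come from an isometry. The bridge is Mostow--Prasad rigidity. Viewing $\phi$ as a cellular self-homeomorphism of $M$, rigidity makes it homotopic to an isometry $g$ of $M$, and it then suffices to show that the combinatorial automorphism induced by a cellular self-homeomorphism of $M$ depends only on its homotopy class, whence $\phi=\rho(g)$. For this I would use that $\tilde{\mathcal C}$ is a function of the action of $\pi_1(M)$ on $\H^3$ alone: its $0$-cells are exactly the parabolic fixed points, and each higher cell is the convex hull of its ideal vertices and is determined by that vertex set. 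Hence a lift $\tilde\psi$ of any cellular self-homeomorphism $\psi$, equivariant for $\psi_*\in\mathrm{Aut}(\pi_1 M)$, sends the fixed point of a parabolic $\gamma$ to that of $\psi_*(\gamma)$, so its action on the $0$-skeleton, and therefore on all of $\tilde{\mathcal C}$, is determined by $\psi_*$; changing the lift conjugates $\psi_*$ by a deck transformation and does not affect the induced automorphism of $\mathcal C$. Applying this to $\psi=\phi$ (which induces $\phi$ itself) and to $\psi=g$ (which induces $\rho(g)$), and using $[\phi_*]=[g_*]$ in $\mathrm{Out}(\pi_1 M)$ since $\phi$ and $g$ are homotopic, gives $\phi=\rho(g)$.

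The step I expect to be the main obstacle is this last one. Two ingredients need care: that the equal-volume normalization of the cusps genuinely makes the whole Epstein--Penner construction depend only on $\pi_1(M)$ acting on $\H^3$, so that an isometry permuting cusps still preserves $\tilde{\mathcal C}$; and that each cell of $\tilde{\mathcal C}$ is recovered from its set of ideal vertices. The second is transparent when $\mathcal C$ is a triangulation --- four ideal points span a unique ideal tetrahedron, which covers every example in this paper --- but in general rests on the standard fact that a face of a locally finite convex polyhedron is the convex hull of its vertices and is determined by them. With these in hand, $\rho$ is an isomorphism, and in particular $M$ is asymmetric whenever $\mathcal C$ has no nontrivial combinatorial automorphism.
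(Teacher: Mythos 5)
Your argument is correct, and it is worth noting how it sits relative to the paper: the paper does not prove this statement at all but quotes it from Henry--Weeks, with the preceding paragraph only supplying the easy direction (an isometry permutes the equal-volume horoball lifts, hence preserves the Epstein--Penner convex hull and descends to a combinatorial automorphism --- your map $\rho$). What you add beyond the paper's text is exactly the content of the cited result: injectivity, and surjectivity via Mostow--Prasad rigidity together with the observation that the lifted cellulation $\tilde{\mathcal C}$ depends only on the $\pi_1(M)$-action (parabolic fixed points as $0$-cells, cells recovered as convex hulls of their ideal vertex sets), so that homotopic cellular maps induce the same automorphism of $\mathcal C$; this is the standard proof and matches the source being cited. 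Two small points to make airtight: in the injectivity step, ``fixes each vertex'' should be read with the convention (as in SnapPy) that a combinatorial isomorphism carries the corner correspondence of each cell --- in a one-cusped manifold all corners map to the single vertex of $\mathcal C$, so it is the trivial corner permutation on a lifted top cell $\tilde\sigma$, not the vertex map downstairs, that pins down the four ideal points and forces $\tilde f=\mathrm{id}$; and in the surjectivity step you implicitly use that an abstract combinatorial automorphism is realized by some cellular self-homeomorphism of $M$, which is standard but deserves a sentence. Finally, observe that for the application in the paper (certifying asymmetry) only the injective homomorphism $\rho$ is needed; surjectivity is extra information.
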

 
From now on, let $\cT$ denote an ideal triangulation of $M$ where each
topological tetrahedron has been assigned a shape: an isometry type of
an ideal tetrahedron with geodesic sides in $\H^3$.  Each shape is
specified by a complex number, and these numbers must satisfy certain
polynomial conditions which ensure that these geometric tetrahedra
glue up to give the complete hyperbolic structure on $M$
\cite{ThurstonNotes1979, Weeks2005}.

In \cite{Weeks1993convex}, Weeks gave an easy way to check whether
such a given geometric ideal triangulation is canonical.  Let $X$ be
one of the ideal tetrahedra, and label its vertices $\{0, 1, 2, 3\}$.
For some fixed horotorus cross section of the cusp near vertex $i$,
let $R_i^X$ denote the circumradius of the cross section and let
$\theta^X_{ij}$ denote the dihedral angle of the edge from vertex $i$
to vertex $j$.  For the face $F$ of $X$ opposite vertex $i$, define
\begin{equation}\label{eq:tiltdef}
\tilt{X, F} =  R^X_i - \sum_{k\neq i} R^X_k \cos \theta^X_{ik} 
\end{equation}
If $Y$ is the other tetrahedron sharing $F$ as a face, set $\tilt{F}
\assign \tilt{X, F} + \tilt{Y, F}$. 
With this notation, Weeks' criterion is as follows:
\begin{theorem}[{\cite[Prop 3.1, Thm 5.1]{Weeks1993convex}}]
\label{thm:weeks} 
A geometric ideal triangulation $\cT$ of a cusped manifold $M$ is its
canonical cellulation if and only if every face $F$ of $\cT$ has
$\tilt{F} < 0$.
\end{theorem}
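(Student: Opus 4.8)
The plan is to pass to the Epstein--Penner picture and reduce Weeks' criterion to a local statement at each face, then glue the local information together by a convexity argument. Fix disjoint equal-volume horotorus neighborhoods of the cusps and lift them to the $\pi_1(M)$--invariant horoball packing $\{B\}$, with lightcone vectors $\{v_B\}\subset\R^{3,1}$; let $C=\mathrm{conv}\{v_B\}$, so that the faces of $\partial C$ visible from the origin radially project to the canonical cellulation. Each geometric ideal tetrahedron $X$ of $\cT$ lifts to a Euclidean $3$--simplex $\widehat X=\mathrm{conv}(v_0,\dots,v_3)$ spanning an affine hyperplane $H_X$, and radial projection of $\widehat X$ recovers $X$; hence $\Sigma:=\bigcup_X\widehat X$ is a properly embedded, $\pi_1(M)$--invariant polyhedral hypersurface which is a \emph{radial graph} over $\H^3$, in the sense that every ray from the origin through a point of $\H^3$ meets $\Sigma$ exactly once. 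In this language the theorem asserts exactly that all tilts are negative if and only if $\Sigma$ is the part of $\partial C$ visible from the origin, i.e.\ $\cT$ is the canonical cellulation.

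The heart of the matter is a local lemma: for adjacent tetrahedra $X,Y$ sharing a face $F$, the simplices $\widehat X$ and $\widehat Y$ fold \emph{strictly convexly} along their common $2$--face --- equivalently, the vertex of $\widehat Y$ opposite $F$ lies strictly on the side of $H_X$ away from the origin --- if and only if $\tilt{F}<0$, and $\tilt{F}=0$ precisely when $H_X=H_Y$. I would prove this by normalizing the horoball at the vertex opposite $F$ in $X$ to lie at infinity in the upper half-space model: then the $R^X_k$ become honest Euclidean circumradii of the horospherical cross-sections, the dihedral angles $\theta^X_{ik}$ are visible directly, and a direct computation identifies $\tilt{X,F}$, up to a positive scalar depending only on $F$, with the signed Minkowski ``height'' of the vertex of $\widehat X$ opposite $F$ above the affine $2$--plane spanned by the lightcone vectors of $F$. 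The factor $\cos\theta^X_{ik}$ appears as the projection of the circumradius at vertex $k$ onto the direction transverse to that plane along the edge $ik$; adding the contributions of $X$ and $Y$ yields the dihedral defect across $F$, whose sign is exactly the convexity condition.

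Granting this, the forward implication is immediate: if $\cT$ is the canonical cellulation then each $\widehat X$ is a genuine face of $\partial C$, so $H_X$ supports $C$ and every fold is convex, and since the canonical cellulation is then a triangulation no two adjacent cells are coplanar, so the folds are strict and $\tilt{F}<0$ for all $F$. For the converse, suppose all tilts are negative, so $\Sigma$ is strictly locally convex along every $2$--face. Combined with the edge-consistency equations of the \emph{complete} hyperbolic structure --- which force the tetrahedra around each edge to fill an honest $2\pi$ neighborhood --- this upgrades to local convexity of $\Sigma$ at every point, including along its edges. A connected, properly embedded, locally convex hypersurface in $\R^4$ not contained in a hyperplane bounds a convex region (van Heijenoort/Tietze); since $\Sigma$ is a radial graph through each $v_B$ and has every $v_B$ among its vertices, comparing it radially with the visible boundary of $C$ forces the two to coincide, so $\cT$ is the canonical cellulation.

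The main obstacle is the bookkeeping in the local lemma --- relating the trigonometric quantity $R^X_i-\sum_{k\neq i}R^X_k\cos\theta^X_{ik}$ to the genuine Minkowski tilt of $H_X$ --- where getting every normalization and sign right (how $R_k$ scales with the chosen cusp cross-section, the scaling of $v_B$ along the lightcone, future- versus past-pointing conventions) is where essentially all the computation lies. The global convexity step is classical but still requires care: ``locally convex along faces'' must be genuinely promoted to ``locally convex everywhere,'' and it is precisely there that completeness of the hyperbolic structure, via the edge equations, is used.
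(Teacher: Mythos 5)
You should first be aware that the paper does not prove this statement: it is imported directly from Weeks \cite[Prop.~3.1, Thm.~5.1]{Weeks1993convex}, so the only meaningful comparison is with the argument in that source (and in \cite{EpsteinPenner1988}), which is exactly the picture you set up. Your outline is the standard one: lift the geometric ideal tetrahedra to the Euclidean simplices spanned by the lightcone vectors of the horoballs, prove the local lemma identifying $\tilt{F}<0$ with a strictly convex fold of $\Sigma$ along the lift of $F$ (this is precisely Weeks' Prop.~3.1, and your plan of normalizing the vertex opposite $F$ to infinity is how such computations are done), and then globalize to conclude that $\Sigma$ is the visible part of the boundary of the Epstein--Penner convex hull. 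The forward implication and the final radial-graph comparison with the convex hull are fine as you state them.

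The genuine gap is in your globalization of the converse. A van Heijenoort-type theorem needs local convexity at \emph{every} point of the hypersurface, together with completeness/properness and a point of strict convexity, and $\Sigma$ is worst-behaved exactly where your sketch is silent: at the lightcone vertices $v_B$. Infinitely many lifted simplices share each such vertex (the corners there form the universal cover of the triangulated horotorus cross-section), so $\Sigma$ is not even locally finite at $v_B$, and face-convexity plus the edge equations says nothing directly about local convexity there. The edge step itself is also asserted rather than argued: passing from convexity along the $2$-faces to convexity along a lifted edge does use the radial-graph structure together with the fact that the dihedral angles around the edge sum to exactly $2\pi$ (so the simplices around the lifted edge do not wrap), and this deserves a proof rather than a sentence. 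As written, then, the hard direction is not complete. One way to repair it is to avoid the ideal vertices altogether, e.g.\ by verifying convexity of the region bounded by $\Sigma$ along generic radial $2$-planes or geodesics of $\H^3$ that miss the projected $1$-skeleton, where only the face folds are crossed, and then concluding by continuity; after that, your closing argument (every $v_B$ lies on $\Sigma$, every simplex of $\Sigma$ lies in the hull of the $v_B$, both $\Sigma$ and the visible boundary are radial graphs) does force $\cT$ to be canonical. Alternatively, one simply defers to the cited proof, which is what the paper itself does.
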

\noindent
Geometrically, the gluing at the face $F$ is convex, flat, or concave,
depending on whether $\tilt{F}$ is negative,
zero, or positive. 

\subsection{Finding the canonical triangulation}  
We next explain how SnapPy attempts to find the canonical cellulation.
This gives context for our results and highlights the necessity of
a verified computation by showing what could go wrong. However, the
reader interested only in the proofs of our results can safely
skip ahead to \S \ref{sec:certhyp}.  
   
In \cite{Weeks1993convex}, Weeks gave a procedure, implemented in
\cite{SnapPy},  to transform an arbitrary geometric
triangulation $\cT$ of a hyperbolic \3-manifold $M$ into the canonical
cellulation.  Neglecting for the moment the
numerical issues inherent in floating-point arithmetic, his procedure is the following:
\begin{enumerate}
\renewcommand{\labelenumi}{(\arabic{enumi}) }
\renewcommand{\theenumi}{\arabic{enumi}}
\item \label{alog:test} If $\tilt{F} < 0$ for every face of $\cT$, then $\cT$ itself is
  the canonical triangulation by Theorem~\ref{thm:weeks}.  If
  $\tilt{F} \leq 0$ for every face, then $\cT$ is a tetrahedral
  subdivision of the canonical cellulation.  In either case, the
  procedure terminates.  

\item \label{alog:2to3} If there is a face $F$ with $\tilt{F} > 0$ and with the property that performing a
2-to-3 Pachner move on $F$ creates only positively 
  oriented tetrahedra, then replace $\cT$ with the result of this
  2-to-3 move and go back to Step~\ref{alog:test}.

\item \label{alog:3to2} If there is a valence three edge $E$ of $\cT$ with a face $F$
  incident to $E$ having $\tilt{F} \geq 0$, then replace $\cT$ with the
  result of the 3-to-2 Pachner move on $E$ and go back to
  Step~\ref{alog:test}.

\item If some face $F$ has $\tilt{F} > 0$ but no moves permitted in
  Steps~\ref{alog:2to3} and \ref{alog:3to2} are possible, do a
  sequence of random Pachner moves to replace $\cT$ with a different
  geometric triangulation and return to Step~\ref{alog:test}.
\end{enumerate}

While in practice this procedure almost always succeeds in finding the
canonical cellulation, it is not known to terminate with probability
1.  More significantly for us, even when it does terminate,
floating-point issues may cause the cellulation returned \emph{not} to
be canonical.  Specifically, as the shapes are known only
approximately and round-off errors may accumulate, SnapPy may conclude
erroneously that $\tilt{F} \leq 0$ for all $F$.  This is not merely a
theoretical concern.  For example, there is a certain 16 tetrahedra
triangulation of the exterior to the link $L10a154$ (included in
\cite{ancillary}) where SnapPy identifies the wrong cellulation as
canonical; in this case, the actual canonical cellulation has
non-tetrahedral cells, which is the hardest case because some tilts
are zero.

\subsection{Certifying hyperbolic structures}\label{sec:certhyp}

Before rigorously finding the canonical triangulation, we must first
certify the existence of a hyperbolic structure.  For this we used the
verification scheme of \cite{hikmot2013verified} which replaces
floating-point computations with rigorous interval arithmetic. In
interval arithmetic, a number $z\in\C$ is partially specified by
giving a rectangle with vertices in $\Q(i)$ which contains $z$.
Because the vertices are rational, such intervals can be exactly
stored on a computer and rigorously combined by the operations
$+,-,\cdot,/$ to create other such intervals.  The cost is that the
sizes of the rectangles grow with the number of operations.  Given an
ideal triangulation $\cT$ of a \3-manifold $M$, in favorable
circumstances the verification scheme of \cite{hikmot2013verified}
produces an interval for each tetrahedral shape, together with a proof
that the actual hyperbolic structure has shapes lying in those
intervals.

\subsection{Certifying canonical triangulations}\label{sec:certone}

We now explain how to extend the work of \cite{hikmot2013verified} to
rigorously certify a triangulation $\cT$ of $M$ as canonical.  The
basic idea is to use interval arithmetic when checking the hypotheses
of Theorem~\ref{thm:weeks}, starting from the guaranteed shape
intervals produced by \cite{hikmot2013verified}.  Note that for a real
interval $r$, it makes sense to say that say $r < 0$ when both of the
endpoints of $r$ are negative.  (In contrast, there is no notion of
equality for intervals since an interval is just a stand-in for some
unknown number inside it.)  Thus if we compute $\tilt{F}$ as a real
interval from the guaranteed shape intervals, we can potentially
certify that $\tilt{F} < 0$ as required by Theorem~\ref{thm:weeks}.
From (\ref{eq:tiltdef}), one sees that it suffices to compute the
quantities $R^X_i$ and $\cos\big(\theta^X_{i,j}\big)$.

We begin with the easier case where $M$ has a single cusp. We
construct a particular cusp cross section by first choosing a corner
of a fixed tetrahedron and then selecting a horospherical Euclidean
triangle whose first side has length 1.  The (known) shape of the
tetrahedron determines the other two sides of this cusp triangle, and
from there, one can propagate the cusp cross section to adjacent
tetrahedra.  Since there is only one cusp, this initial choice
determines the whole cross section.  The resulting ``cusp cross
section'' could be too large to be embedded, but it represents an
actual cross section up to a uniform dilation; since
(\ref{eq:tiltdef}) is homogenous in the $R^X_i$, this has no effect on
checking the hypotheses of Theorem~\ref{thm:weeks}.

The quantity $R^X_i$ is the circumradius of the corresponding cusp triangle,
and the circumradius of a triangle may be computed from the lengths of
its edges using only the operations $+,-,\cdot,/$ and
$\sqrt{\vphantom{\alpha} \quad }$, all of which are supported by the
interval arithmetic scheme of \cite{hikmot2013verified}; see $\S$3.1
of that paper for details.  The cosines of the dihedral angles that
appear in (\ref{eq:tiltdef}) can be similarly computed; if the $(i,
j)$ edge has shape $z=a+bi$, we have $\cos\big(\theta^X_{i,j}\big) =
\cos\big(\arg (z)\big) = a\big/\sqrt{a^2+b^2}$.

Thus in the 1-cusped case, we can compute tilt intervals from the
initial shape intervals and hence potentially apply
Theorem~\ref{thm:weeks} in a rigorous way; we will do precisely this
for the 22 manifolds of Theorem~\ref{thm:comp}.  

\subsection{Multiple cusps} When $M$ has multiple cusps, as in the proof of Theorem~\ref{thm:link}, there is an additional
subtlety.  Specifically, the canonical cellulation is defined in terms
of cusp cross sections which \emph{all have the same area}.  As mentioned above,
interval arithmetic does not support the notion of equality.  In order to describe our solution to this issue,  we must
first introduce some more precise notation.  Letting
$\inta$ and $\intb$  denote intervals, we say that $\inta < \intb$
if $a < b$ for all $a \in \inta$ and $b\in \intb$.  We extend this
notation, using $\intC$ to denote a cusp cross section which is
computed by interval arithmetic from the guaranteed shape data as in
\S \ref{sec:certone}, and
we say that an actual cusp cross section $C$ lies in $\intC$ if all its
Euclidean triangles have side lengths in the corresponding interval
side lengths of $\intC$.  In particular, if $C$ is in $\intC$, then
$\Area(C)$ is in $\Area(\intC)$, where the latter is  an honest interval.

For notational simplicity, let us start with the case where $M$ has
two cusps.  Let $\intCzero$ and $\intCone$ be cusp cross sections
constructed from the shape data as above.  Scale $\intCone$ to create
$\intCm$ and $\intCp$ where the following holds in the interval sense:
\begin{equation}\label{eq:areas}
\Area\big(\intCm\big) < \Area\big(\intCzero\big) < \Area\big(\intCp\big)  
\end{equation}
If $F$ is a face of $\cT$, we use $\tilt{F, \intCzero, \intCone}$ to
denote the tilt interval of $F$ with respect to the cusps
$\intCzero$ and $\intCone$ computed as in \S \ref{sec:certone}.
The proof of Theorem~\ref{thm:link} rests on the following:
\begin{proposition}\label{prop:multicanon}
  Suppose $\cT$ is a geometric triangulation of a \2-cusped manifold
  $M$ with guaranteed shape intervals, and suppose further  that $\intCzero, \intCm,$
  and $\intCp$ are cusp cross sections satisfying (\ref{eq:areas}).
  If for every face $F$ of $\cT$ we have $\tilt{F, \intCzero, \intCm}
  < 0$ and $\tilt{F, \intCzero, \intCp} < 0$, then $\cT$ is the
  canonical triangulation of $M$.
\end{proposition}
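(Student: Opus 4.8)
The plan is to reduce Proposition~\ref{prop:multicanon} to Weeks' criterion (Theorem~\ref{thm:weeks}) by a monotonicity argument in the area of the second cusp. The key observation is that the canonical cellulation is determined by requiring the two cusp cross sections to have \emph{equal} area, but nothing forces that common value: rescaling \emph{both} cross sections by the same factor changes nothing, so the only genuine parameter is the ratio of the two areas. Fix the cross section $C_0$ of the first cusp once and for all, and for $t > 0$ let $C_1(t)$ denote the cross section of the second cusp scaled so that $\Area(C_1(t)) = t \cdot \Area(C_0)$. Then the true canonical cellulation of $M$ corresponds to $t = 1$. For a fixed face $F$ of $\cT$, the quantity $\tilt{F, C_0, C_1(t)}$ is, by formula (\ref{eq:tiltdef}), an affine-linear function of the circumradii appearing there; the circumradii coming from corners of the first cusp are constant in $t$, while those coming from corners of the second cusp scale linearly in $t$. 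Hence $t \mapsto \tilt{F, C_0, C_1(t)}$ is an affine-linear function of $t$, and in particular it is \emph{monotone} in $t$.

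With this in hand, the argument runs as follows. First I would record the affine-linearity and note that it passes to interval arithmetic: the interval-valued function $t \mapsto \tilt{F, \intCzero, \text{(scaled } \intCone)}$ is built from the shape intervals by the operations $+, -, \cdot, /, \sqrt{\ }$ together with multiplication by the scalar $t$, and for an affine-linear function evaluated on intervals the image over a parameter interval $[t^-, t^+]$ is contained in the convex hull of the images at the endpoints. Next, I would show that the true area ratio $t_0 = 1$ lies in an interval $[t^-, t^+]$ determined by the scalings $\intCm$ and $\intCp$. Concretely, $\intCm$ and $\intCp$ are scalings of $\intCone$ chosen so that (\ref{eq:areas}) holds in the interval sense; since the honest cross sections $C_0$ and $C_1(1)$ lie in $\intCzero$ and in some common scaling of $\intCone$, and $\Area(C_0) = \Area(C_1(1))$, the area inequalities (\ref{eq:areas}) force the scaling factor that produces $C_1(1)$ to lie strictly between the factors defining $\intCm$ and $\intCp$. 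Then, by monotonicity (affine-linearity) of $\tilt{F, C_0, C_1(t)}$ in the scaling factor, its value at the true factor lies between $\tilt{F, C_0, C_1^-}$ and $\tilt{F, C_0, C_1^+}$, and these in turn lie in the intervals $\tilt{F, \intCzero, \intCm}$ and $\tilt{F, \intCzero, \intCp}$ respectively.

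Combining these steps: the hypothesis that $\tilt{F, \intCzero, \intCm} < 0$ and $\tilt{F, \intCzero, \intCp} < 0$ for every face $F$ means both endpoint tilt intervals are strictly negative, so by the convex-hull/monotonicity observation the true tilt $\tilt{F}$ of the equal-area configuration is strictly negative for every face $F$. Theorem~\ref{thm:weeks} then immediately identifies $\cT$ as the canonical cellulation of $M$, and since all cells are tetrahedra it is the canonical triangulation. I would close by remarking that the two-cusp case presented here extends verbatim to $n$ cusps: fix one cross section, parametrize the remaining $n-1$ by scaling factors, and observe that each tilt is affine-linear (hence monotone, hence controlled by endpoint evaluations) in each of the $n-1$ scaling parameters separately; one then brackets each true scaling factor between a lower and upper scaling as in (\ref{eq:areas}) and evaluates at all $2^{n-1}$ corner configurations.

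The main obstacle I expect is the bookkeeping in the second step, namely pinning down precisely why the equal-area condition forces the true scaling factor into the open bracket between the factors defining $\intCm$ and $\intCp$. This requires being careful that ``$C$ lies in $\intC$'' (all Euclidean triangle side lengths lie in the corresponding interval side lengths) really does imply $\Area(C) \in \Area(\intC)$ as stated in the excerpt, and that the honest configuration $(C_0, C_1(1))$ with $\Area(C_0) = \Area(C_1(1))$ is genuinely captured by some scaling of $\intCone$ satisfying the interval inequalities (\ref{eq:areas}) strictly. The affine-linearity and the interval-arithmetic propagation are essentially formal once set up correctly; the subtlety is entirely in relating the ``unknown true area'' to the two explicitly computed bracketing scalings without ever invoking equality of intervals.
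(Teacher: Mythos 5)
Your proposal is correct and takes essentially the same approach as the paper: the paper fixes actual cross sections $C_0, C_1^-, C_1^+$ lying in the computed interval cross sections, writes the equal-area cross section $C_1'$ as a convex combination of $C_1^-$ and $C_1^+$ (viewed as vectors of circumradii), and uses linearity of the tilt to conclude negativity --- precisely your bracketing/monotonicity argument, including the same $2^{n-1}$-corner extension to more cusps. (One minor inaccuracy: circumradii scale with the \emph{linear} scaling factor, i.e.\ with $\sqrt{t}$ when $t$ is the area ratio, so the tilt is affine in $\sqrt{t}$ rather than in $t$; this does not affect the monotonicity your argument actually uses.)
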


\begin{proof}
  Fix actual cusp cross sections $C_0$, $C_1^+$, $C_1^-$ in
  $\intCzero, \intCm, \intCp$, respectively.  The hypotheses imply
  $\Area(C_1^-) < \Area(C_0) < \Area(C_1^+)$. Let $C_1'$ be an actual
  cross section for the second cusp with $\Area(C_1') = \Area(C_0 )$.
  Thinking of cusp cross sections as vectors
  whose coordinates are the circumradii of their constituent Euclidean
  triangles, we can view $C_1'$ as a convex combination
  \[
  C_1' = (1-t) \cdot C_1^- + t \cdot C_1^+ \mtext{for some $t \in
    (0,1)$.}
  \]
  For any face $F$, the function $\tilt{F, C_0, \ \cdot \ }$ is linear
  in the remaining input, and so since both $\tilt{F, C_0, C_1^-}$ and
  $\tilt{F, C_0, C_1^+}$ are negative by our hypotheses, we must have
  $\tilt{F, C_0, C_1'} < 0$.  In particular, since $\Area(C_0) =
  \Area(C_1')$, Theorem~\ref{thm:weeks} now implies that $\cT$ is
  canonical.
\end{proof}
Proposition~\ref{prop:multicanon}  extends easily to manifolds with
three or more cusps.  First fix some $\intCzero$ for the first cusp,
and then for each further cusp, choose a pair of cross sections
$\interval{C_n^\pm}$ with 
\[
\Area\big(\interval{C_n^-}\big) < \Area\big(\intCzero\big) <\Area\big(\interval{C_n^+}\big).
\]
If for every face $F$ and every pattern of signs $\{ \epsilon_n\}$ one has 
\[
\tilt{\interval{C_0}, \interval{C_1^{\epsilon_1}}, \interval{C_2^{\epsilon_2}}, \ldots,
  \interval{C_m^{\epsilon_m}}} < 0, 
\]
then $\cT$ must be canonical.  The point is again that the
actual equal area cross sections are convex combinations of cross
sections in the $\interval{C_n^\pm}$, all of which have negative tilts
when combined with $[C_0]$.

\begin{remark}
  The subsequent paper
  \cite{FominykhGaroufalidisGoernerTarkaevVesnin2015} gives an elegant
  simplification of our technique in the multicusped case (see their
  Section 3.4) and they provide an implementation of their approach
  which works for any number of cusps.
\end{remark}

\subsection{Canonical cellulations with more complicated cells}

Canonical cellulations are generically triangulations, but it would
be useful to be able to certify canonicity of cellulations with more
complicated cells, especially as these include some of the most
symmetric examples.  It is unclear whether this can be done
directly in the context of interval arithmetic, since  the lack of
equality testing means we can not be sure that some tilt is precisely
zero, exactly the condition that leads to non-tetrahedral cells.  In small cases,
one should be able to use exact arithmetic in a number field to deal
with this, as in \cite{CoulsonGoodmanHodgsonNeumann2000}, but the
interval arithmetic techniques of \cite{hikmot2013verified} can be
successfully applied to \emph{much} more complicated manifolds.

\section{Asymmetric manifolds with lens space fillings}\label{sect:comp}

Before proving Theorem~\ref{thm:link}, we warm up with the following
easier and more concrete result, which, when combined with
Theorem~\ref{thm:theory}, also suffices to prove
Theorem~\ref{thm:main}.
\begin{restatable}{theorem}{theoremcomp}\label{thm:comp}
  Table~\ref{table:examples} lists 22 distinct 1-cusped hyperbolic
  \3-manifolds which are asymmetric and have two lens space fillings
  of coprime order.
\end{restatable}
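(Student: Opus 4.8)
The plan is to verify, for each of the 22 candidate manifolds, three separate assertions: that it is genuinely hyperbolic, that it is asymmetric, and that it admits two lens space Dehn fillings of coprime order. The first two are handled by the machinery of Section~\ref{sect:theory2}, while the third is a combination of homological bookkeeping and an appeal to known recognition results for small Seifert fibered spaces.

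First I would produce, for each manifold, a specific ideal triangulation $\cT$ together with guaranteed shape intervals via the verified computation scheme of \cite{hikmot2013verified}; this certifies that the manifold is hyperbolic. Next, starting from those shape intervals, I would run SnapPy's implementation of Weeks' procedure to \emph{propose} a candidate canonical triangulation, and then use the interval-arithmetic version of Theorem~\ref{thm:weeks} developed in \S\ref{sec:certone}: compute the circumradii $R^X_i$ and the cosines $\cos(\theta^X_{ij})$ as real intervals, assemble $\tilt{F}$ for every face $F$, and check that each such interval is strictly negative. Since all 22 examples are single-cusped, the subtlety about equal-area cusp cross sections does not arise, so this step is exactly the 1-cusped case of \S\ref{sec:certone}. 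Having certified the triangulation as canonical, I would then invoke Corollary~\ref{cor:asym}: compute the group of combinatorial isomorphisms of the cellulation (a finite combinatorial search, done in SnapPy) and check it is trivial, which proves asymmetry.

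For the lens space fillings, I would exhibit two slopes $\alpha$ and $\beta$ on each manifold for which the fillings $M_\alpha$ and $M_\beta$ are lens spaces. Concretely, one identifies the filled manifold as a small Seifert fibered space from its triangulation/presentation, computes $\abs{\HoneZ{M_\alpha}}$ and $\abs{\HoneZ{M_\beta}}$ to confirm coprimality, and then certifies that each filling is actually a lens space --- e.g.\ by recognizing the resulting Seifert invariants as those of a lens space, or by matching the filled manifold against the census of spherical space forms. This recognition can be made rigorous since lens spaces among small Seifert fibered spaces are characterized by their Seifert data, and the data can be read off combinatorially.

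The main obstacle I expect is the asymmetry step: it is only useful if the triangulation SnapPy hands back really is the canonical one, and this hinges on every tilt interval coming out strictly negative. When the true canonical cellulation has non-tetrahedral cells some tilts are exactly zero, and interval arithmetic --- which has no equality test --- cannot certify those; the $L10a154$ example in \S\ref{sec:certhyp} shows this failure mode is real. So the real content is in the fortunate fact that for all 22 manifolds in Table~\ref{table:examples} the canonical cellulation is an honest triangulation with all tilts bounded away from zero, which is what makes the interval-arithmetic certification go through; the remainder is careful but routine verified computation, with the source and data archived in \cite{ancillary}.
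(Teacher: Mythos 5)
Your outline of the core certification pipeline matches the paper's proof almost exactly: guaranteed shape intervals from \cite{hikmot2013verified} give hyperbolicity, interval-arithmetic tilts certify the canonical triangulation via Theorem~\ref{thm:weeks} (with the 1-cusped simplification you note), and triviality of the combinatorial automorphism group of that triangulation gives asymmetry through Corollary~\ref{cor:asym}. For the fillings your route is slightly different but workable: the paper does not identify Seifert structures at all; it simply checks (rigorously, in SnapPy) that the $(1,0)$ and $(0,1)$ fillings have fundamental group presentations that are obviously finite cyclic of coprime order, and then invokes Geometrization to conclude they are lens spaces, with Regina used only to pin down which lens spaces appear in Table~\ref{table:examples}. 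Your Seifert-data recognition would also work but adds machinery the paper avoids.

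The genuine gap is that the statement asserts the 22 manifolds are \emph{distinct}, and your proposal never addresses this. Certifying each manifold separately does not rule out that two entries of Table~\ref{table:examples} are secretly homeomorphic, and ``distinct triangulation files'' is of course not enough. The paper closes this with a short but essential step that your own machinery hands you for free: once each $\cT$ is certified canonical, the canonical triangulation is a complete isometry (hence homeomorphism) invariant, so it suffices to check that no two of the 22 triangulations are combinatorially isomorphic --- a finite check SnapPy performs. You should add this step; without it the theorem as stated (``22 distinct \ldots manifolds'') is not proved.
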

\noindent
We provide a rigorous computer-assisted proof of
Theorem~\ref{thm:comp} using SnapPy \cite{SnapPy}, the verification scheme
of \cite{hikmot2013verified}, and the techniques given in 
Section~\ref{sect:theory2}.  These examples were found in the census
of 1-cusped hyperbolic \3-manifolds with at most 9 tetrahedra
\cite{Burton2014, CallahanHildebrandWeeks1999} by a 
brute-force search through these 59{,}107 manifolds.
\begin{table}
\small
  \begin{center}
    \begin{tabular}{lrllcccc}
\toprule
    \centercolhead{$M$} &  \centercolhead{\#tets} &  \centercolhead{$M_{(1,0)}$} &    \centercolhead{$M_{(0,1)}$} &  $g$ & $\mathrm{vol}(M)$ & systole \\

\midrule
      $v3372^*$ &      7 &    $L(7, 1)$ &   $L(19, 7)$ &  10 &  6.541194 &  0.952884 \\
     $t10397$ &      8 &   $L(11, 2)$ &   $L(14, 3)$ &  12 &  6.880362 &  0.911798 \\
     $t10448$ &      8 &   $L(17, 5)$ &   $L(29, 8)$ &  15 &  6.891314 &  0.716411 \\
     $t11289^*$ &      8 &   $L(11, 2)$ &   $L(26, 7)$ &  15 &  7.084874 &  0.576033 \\
    \hline 
    $t11581$ &      8 &    $L(7, 1)$ &  $L(31, 12)$ &  16 &  7.180413 &  0.767839 \\
     $t11780$ &      8 &   $L(23, 7)$ &    $L(6, 1)$ &  12 &  7.232671 &  0.643558 \\
     $t11824$ &      8 &  $L(34, 13)$ &   $L(19, 4)$ &  19 &  7.246332 &  0.480409 \\
     $t12685$ &      8 &   $L(14, 3)$ &   $L(29, 8)$ &  18 &  7.674889 &  0.693829 \\
     \hline

$o9_{34328}^*$ &     10 &   $L(13, 2)$ &  $L(34, 13)$ &  19 &  7.529794 &  0.312418 \\
 $o9_{35609}$ &     10 &  $L(50, 19)$ &   $L(29, 8)$ &  27 &  7.631975 &  0.237482 \\
 $o9_{35746}^*$ &     10 &   $L(17, 3)$ &  $L(41, 12)$ &  24 &  7.642118 &  0.238001 \\
 $o9_{36591}$ &      9 &  $L(55, 21)$ &   $L(31, 7)$ &  31 &  7.707673
 &  0.188586 \\
 \hline

 $o9_{37290}$ &      9 &  $L(31, 12)$ &   $L(19, 4)$ &  22 &  7.762770 &  0.442218 \\
 $o9_{37552}$ &      9 &   $L(35, 8)$ &   $L(13, 3)$ &  18 &  7.781895 &  0.408545 \\
 $o9_{38147}$ &      9 &  $L(29, 12)$ &  $L(41, 11)$ &  27 &  7.831770 &  0.392648 \\
 $o9_{38375}$ &      9 &   $L(17, 3)$ &   $L(29, 8)$ &  24 &  7.851404
 &  0.349858 \\
 \hline
 $o9_{38845}$ &      9 &   $L(13, 2)$ &   $L(18, 5)$ &  15 &  7.896384 &  0.770335 \\
 $o9_{39220}$ &     10 &   $L(13, 2)$ &  $L(46, 17)$ &  28 &  7.930877 &  0.304931 \\
 $o9_{41039}$ &     10 &   $L(13, 2)$ &   $L(21, 8)$ &  16 &  8.122543 &  0.916284 \\
 $o9_{41063}$ &      9 &   $L(26, 7)$ &  $L(41, 11)$ &  30 &  8.126169
 &  0.386869 \\
\hline 

 $o9_{41329}$ &      9 &   $L(34, 9)$ &  $L(49, 18)$ &  34 &  8.159350 &  0.364220 \\
 $o9_{43248}$ &     10 &   $L(37, 8)$ &   $L(18, 5)$ &  23 &  8.444914 &  0.689245 \\
\bottomrule
\end{tabular}

  \end{center}
  \caption{The 22 manifolds of Theorem~\ref{thm:comp}.  Here, ``\#tets'' refers to the
    canonical triangulation supplied in \cite{ancillary} and $g$ is the genus of the fibration
    of $M$ over the circle (whose existence follows from
    Theorem~\ref{thm:theory}) computed via the Alexander
    polynomial.  The lens spaces were identified using Regina
    \cite{Regina}.  The manifolds marked with a $*$ also appear in
    Theorem~\ref{thm:link}.    The
    data is all rigorous with the exception of the volume and systole
    columns, which were approximated numerically, as the methods of
    \cite{hikmot2013verified} have not yet been extended to those
    quantities.  Note that
    none of these manifolds are knot complements in $S^3$, since the
    pair of lens space surgeries have fundamental groups whose orders
    differ by more than one.}\label{table:examples}
\end{table} 
\begin{figure}
  \vspace{-0.7cm}
  \begin{center}
    \definecolor{lightblue}{rgb}{0.7,0.83,0.97}
\definecolor{mediumblue}{rgb}{0.62,0.75,0.88}

\usetikzlibrary{decorations.markings}

\tikzset{alpha/.style={color=lightblue, line width=2pt,
  dash pattern=on 5pt off 5pt on 1pt off 5pt,
  line cap=round,
  decoration={
  markings,
  mark=at position #1 with {\arrow{>}}},postaction={decorate}}}

\tikzset{beta/.style={color=nmdlight, line width=2.0pt, 
  dash pattern=on 3pt off 5pt,
  line cap=round,
  decoration={
  markings,
  mark=at position #1 with {\arrow{>}}},postaction={decorate}}}

\tikzset{Rone/.style={color=black, line width=2.0pt,  
  decoration={
    markings,
    mark=at position #1 with {\arrow{>}}},postaction={decorate}}}

\tikzset{Rtwo/.style={color=nmdmedium, line width=2.0pt,  
  decoration={
    markings,
    mark=at position #1 with {\arrow{>>}}},postaction={decorate}}}

% Figure begin

\begin{tikzpicture}[scale=1.05, x=0.14cm, y=0.14cm]
    % Curve labels
    \node at (38.8,28.0) {$R_1$};
    \node at (44.3,51.9) {$R_2$};
    \node at (81.9,38.0) {$\beta$};
    \node at (88.0,49.1) {$\alpha$};

    % Joining b to A, left to right 
    \draw[Rtwo=0.7] (13.9,57.4) 
      .. controls (0.2,49.9) and (2.0,26.4)
      .. (8.4,22.2);
    \draw[alpha=0.54] (13.8,53.9)
      .. controls (7.6,47.1) and (4.9,34.5)
      ..  (10.2,25.0);
    \draw[Rone=0.7]  (13.0,26.6)        
        .. controls (10.4,35.7) and (11.9,44.8)
        .. (16.3,52.1); 
    % To evade kind of bug in TeX/TikZ, have to do this one
    % arrowhead by hand.  No idea why.  
    \draw[color=nmdmedium, line width=2pt] (18.8,51.0) 
      .. controls (18.8,51.0) and (14.4,40.7)
      .. (16.5,28.0);
    \draw[Rtwo=1.0, line cap=round]   (15.95,34.5) -- (15.95,34.0) ;

   % Joining b to C, top to bottom
    \draw[Rtwo=0.75] (37.0,23.3)
      .. controls (35.0,25.7) and (26.6,28.1)
      .. (20.4,25.3);
   \draw[beta=0.64] (35.5,20.8)
       .. controls (31.8,22.5) and (26.0,23.2)
       .. (21.9,22.1);
   \draw[Rtwo=0.70]  (35.2,18.3) 
      .. controls (31.4,17.6) and (26.2,17.5)
      .. (20.8,19.5);
   \draw[Rone=0.6] (37.1,16.3)        
      .. controls (33.3,13.9) and (24.0,14.1) 
      .. (18.3,17.4); 

  % Joining A to a, top to bottom
      \draw[Rone=0.79] (38.3,45.2) 
         .. controls (36.5,48.3) and (31.4,53.3)
         .. (27.0,55.2);
    \draw[alpha=0.65] (25.6,52.7)
      .. controls (30.7,48.2)
      .. (35.9,43.0);
    \draw[Rtwo=0.5] (23.5,51.2)
      .. controls (27.6,45.7)
      .. (34.6,40.0);
    \draw[beta=0.675] (20.9,50.5)
       .. controls (22.2,46.4) and (26.8,40.9) 
       .. (34.0,37.4);

  % Joining A to B, top to bottom
    \draw[Rone=0.7]  (24.5,61.9)
         .. controls (34.3,65.0) and (45.2,62.5)
         .. (51.4,59.7);
    \draw[Rtwo=0.70]  (49.8,56.1)
       .. controls (43.4,58.4) and (34.5,60.1)
       .. (27.5,58.6);

   % Joining a to C
    \draw[Rone=0.7] (41.3,25.1) --  (41.1,31.7);

   % Joining a to B, top to bottom
    \draw[Rtwo=0.65] (50.8,53.6) .. controls (46.3,49.8) 
      .. (43.2,45.6);
    \draw[beta=0.77] (51.4,50.5) --
         (45.3,43.8);
    \draw[Rtwo=0.6] (53.9,49.5) .. controls (51.1,45.7)
      .. (46.7,41.4);
   
  % Joining a to c, top to bottom
    \draw[alpha=0.620] (63.9,25.3)
      .. controls (60.7,31.5) and (54.9,36.4) 
      .. (48.1,38.4);
    \draw[Rone=0.7] (47.0,35.6)        
        .. controls (53.2,33.2) and (58.5,28.8)
        .. (61.1,24.7); 
    \draw[Rtwo=0.40] (44.7,33.4) .. controls (50.0,31.5) and (58.0,24.5) 
      .. (59.0,22.8);

  % Joining C to c, top to bottom
    \draw[alpha=0.74, dash phase=1.5pt] (58.2,20.2)
      .. controls (55.0,22.2) and (51.5,23.0)
      .. (46.9,21.9); 
    \draw[Rone=0.7]  (46.9,18.0)
      .. controls (49.8,16.9) and (54.2,15.9) 
      .. (58.8,17.2);

    \draw[Rtwo=0.40] (45.0,15.8) 
       .. controls (49.9,12.0) and (57.3,11.9)
       .. (61.7,15.2);

  % Joining B to c, left to right
   \draw[Rone=0.6] (58.8,50.4)
      .. controls (64.5,47.2) and (69.5,32.5)
      .. (66.6,24.5);
   \draw[Rtwo=0.40] (62.0,53.5) 
      .. controls (70.5,47.0) and (73.9,32.5)
      .. (69.4,22.5); 

  % The outside
  % alpha 
    \draw[alpha=0.3, dash phase=-3pt] (59.4,60.3) 
     .. controls (64.1,65.9) and (68.1,70.9)
     .. (76.7,68.8);
     \draw[alpha=0.54, dash phase=3pt] (76.7,68.8)
     .. controls (82.7,68.0) and (93.7,53.6) 
     .. (90.7,30.0);
    \draw[alpha=0.513,  dash phase=12pt] (90.7,30.0)
     .. controls (86.0,-6.5) and (44.0,-4.4)
     .. (40.6,14.5);

  % beta 
    \draw[beta=0.7, dash phase=-4.5pt] (21.0,63.9)
       .. controls (32.2,78.5) and (55.6,76.8)
       .. (68.3,66.0);
    \draw[beta=0.55, dash phase=-0.5pt] (68.3,66.0)
       .. controls (83.0,53.0) and (85.1,23.9)
       .. (69.8,18.7); 

  % Draw the circles
  \begin{scope}[draw=black, fill=white, line width=2.0pt, font=\normalsize]
    \filldraw (20.5,57.3) circle [radius=7.1] node {$A$};
    \filldraw (40.9,38.7) circle [radius=7.1] node {$a$};
    \filldraw  (15.1,21.7) circle [radius=6.7] node {$b$};
    \filldraw  (56.2,55.4) circle [radius=6.7] node {$B$};
    \filldraw (64.1,19.6) circle [radius=6.2] node {$c$};
    \filldraw  (41.3,19.4) circle [radius=6.2] node {$C$};
  \end{scope}

  \begin{scope}[font=\footnotesize]
    % Labels around a
    \node at (41.1,33.3) {$1$};
    \node at (44.3,34.4) {$2$};
    \node at (46.0,36.2) {$3$};
    \node at (46.7,38.5) {$4$};
    \node at (46.2,41.0) {$5$};
    \node at (44.5,42.9) {$6$};
    \node at (42.2,43.9) {$7$};
    \node at (38.9,43.8) {$8$};
    \node at (36.7,42.3) {$9$};
    \node at (35.8,40.0) {$10$};
    \node at (35.7,37.3) {$11$};

    % Labels around A
    \node at (24.8,61.2) {$1$};
    \node at (26.1,58.6) {$2$};
    \node at (26.1,56.1) {$3$};
    \node at (25.0,53.9) {$4$};
    \node at (23.25,52.4) {$5$};
    \node at (21.1,52.0) {$6$};
    \node at (19.0,52.2) {$7$};
    \node at (17.1,53.1) {$8$};
    \node at (15.5,54.8) {$9$};
    \node at (15.2,57.5) {$10$};
    \node at (21.0,62.7) {$11$};

    % Labels around b
    \node at (18.5,17.6) {$1$};
    \node at (20.1,19.7) {$2$};
    \node at (20.6,22.1) {$3$};
    \node at (19.5,24.9) {$4$};
    \node at (16.5,26.7) {$5$};
    \node at (12.8,26.3) {$6$};
    \node at (10.7,24.2) {$7$};
    \node at (9.7,21.7) {$8$};

    % Labels around B
    \node at (59.2,51.2) {$1$};
    \node at (54.5,50.6) {$2$};
    \node at (52.5,51.9) {$3$};
    \node at (51.3,53.6) {$4$};
    \node at (50.9,55.9) {$5$};
    \node at (52.2,58.8) {$6$};
    \node at (59.3,59.5) {$7$};
    \node at (61.2,53.7) {$8$};

    % Labels around c
    \node at (66.2,23.8) {$1$};
    \node at (64.0,24.2) {$2$};
    \node at (61.8,23.8) {$3$};
    \node at (60.0,22.2) {$4$};
    \node at (59.3,19.9) {$5$};
    \node at (59.8,17.2) {$6$};
    \node at (61.7,15.5) {$7$};
    \node at (69.1,19.0) {$8$};
    \node at (68.5,21.8) {$9$};

    % Labels around C
    \node at (41.4,23.9) {$1$};
    \node at (45.6,21.9) {$2$};
    \node at (46.1,18.4) {$3$};
    \node at (44.5,16.1) {$4$};
    \node at (40.9,14.8) {$5$};
    \node at (37.6,16.3) {$6$};
    \node at (36.6,18.3) {$7$};
    \node at (36.7,20.8) {$8$};
    \node at (38.0,22.7) {$9$};

  \end{scope}

\end{tikzpicture}
    %M = snappy.Manifold('v3372')
    %sage: G = M.fundamental_group()
    %sage: G
   % Generators:
    %  a,b,c
    %Relators:
     % aBAAccbc
     % abAccbaabcb
    %sage: G.peripheral_curves()
    %[('CCaab', 'cbaa')]
  \end{center}
  \vspace{-1.75cm}
  \caption{A Heegaard diagram for the first manifold $v3372$ in
    Table~\ref{table:examples}, corresponding to 
    $\big\langle a,b,c \  \big| \ R_1 := ab^{-1}a^{-2}c^2bc = 1, \  R_2 := aba^{-1}c^2ba^2bcb = 1 \big\rangle$. 
    Also shown are
    the slopes $\alpha = c^{-2}a^2b$ and $\beta = cba^2$
    which give lens spaces $L(7,1)$ and $L(19,7)$, 
    oriented so any positive combination of them gives an
    $L$-space.
  }\label{fig:v3372}
\end{figure}

\begin{proof}[Proof of Theorem~\ref{thm:comp}]
  The 22 manifolds are specified by particular triangulations that are
  included in \cite{ancillary}. For each triangulation $\cT$, we
  proved the following:
  \begin{enumerate}
  \item \textsl{The manifold $M$ underlying $\cT$ is hyperbolic. }
  
    We used \cite{hikmot2013verified} to find intervals which are
    guaranteed to contain shapes for the tetrahedra of $\cT$ which
    give rise to an actual complete hyperbolic structure on $M$.
  
  \item \textsl{The triangulation $\cT$ is the canonical cellulation of $M$.}
  
    Doing arithmetic with the interval shapes as described in \S
    \ref{sec:certone}, we verified that all the inequalities in
    Theorem~\ref{thm:weeks} hold, and hence $\cT$ is canonical.

  \item \textsl{ $M$ is asymmetric.}
  
    We used SnapPy to find all combinatorial self-isomorphisms of
    $\cT$; as there was only the identity, asymmetry follows from
    Corollary~\ref{cor:asym}.

  \item \textsl{ $M$ has two lens space fillings of coprime order.}
  
    We used SnapPy to check that the $(1,0)$ and $(0,1)$ Dehn fillings
    on $M$ (with respect to the cusp framing specified in the
    triangulation file for $\cT$) have fundamental groups with
    presentations that are obviously those of finite cyclic groups of
    coprime order; the Geometrization Theorem implies that these are
    lens spaces.  This combinatorial step is also performed rigorously
    by SnapPy. (Regina \cite{Regina} can go further and identify the
    particular lens spaces directly, without appealing to
    geometrization; this data is included in
    Table~\ref{table:examples}.)
  \end{enumerate}
  To finish off Theorem~\ref{thm:comp}, it remained to show that
  the examples are distinct.  For this, we checked that no
  two of the triangulations were combinatorially isomorphic.  By
  (b), this implies the 22 manifolds are not isometric and hence not
  homeomorphic. Alternatively, this is proved in \cite{Burton2014}
  by different methods.

  Complete source code for this proof is available at
  \cite{ancillary}.  As a precaution, two disjoint subsets of the
  authors wrote independent implementations of step (b), and the
  entire proof was executed from a single script.  Additionally, our
  code is robust enough to run on all 59{,}107 one-cusped census
  manifolds in \cite{Burton2014}; excluding the 64 cases where SnapPy
  believes there are canonical cells which are not tetrahedra, we were
  able to certify the canonical triangulations for all of these manifolds.
\end{proof}

We next extend the phenomena exhibited in Theorem~\ref{thm:comp} to an
infinite family of examples; note that our conventions for Dehn
filling are specified in Figure~\ref{fig:link}.

\theoremlink

\begin{proof}
  Let $N_k$ denote the $(6k+1, k)$ Dehn filling
  on the second cusp of $N$; we focus on this case first for notational
  simplicity, leaving the $(6k-1, k)$ Dehn filling for later.
  The theorem in this case follows immediately from the next two
  lemmas.
  \begin{lemma}\label{lem:linkasym}
    For all $k \in \Z$ with $\abs{k}$ sufficiently large, the manifold
    $N_k$ is hyperbolic and asymmetric.
  \end{lemma}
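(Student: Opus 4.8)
The plan is to reduce the symmetries of the filled manifold $N_k$ to those of the two--cusped link exterior $N$, and then to eliminate the latter using a rigorous computation of $\mathrm{Isom}(N)$ together with an elementary remark about slopes. First I would compute $\mathrm{Isom}(N)$ rigorously: use \cite{hikmot2013verified} to certify that $N$ is hyperbolic, then, following \S\ref{sec:certone} and Proposition~\ref{prop:multicanon}, certify a specific triangulation of this two--cusped $N$ as its canonical cellulation, so that by Corollary~\ref{cor:asym} the (finite) group $\mathrm{Isom}(N)$ equals the group of combinatorial automorphisms of that cellulation, which SnapPy \cite{SnapPy} computes rigorously. From this data I record which $\psi\in\mathrm{Isom}(N)$ fix the two cusps and which interchange them, and for the cusp--fixing elements the induced matrix $\psi_\ast\in GL_2(\Z)$ on $H_1(\partial_2 N;\Z)$, where $\partial_2 N$ denotes the second cusp. (This is the symmetry--group computation for the ``$23$rd'' cusped manifold mentioned in the introduction.)

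Next I would run the hyperbolic Dehn surgery argument from the proof of Lemma~\ref{lem:hyperdehn}, now filling only the second cusp of $N$. The slopes $(6k+1,k)$ are pairwise distinct, so by the Hyperbolic Dehn Surgery Theorem all but finitely many values of $k$ give a hyperbolic $N_k$; moreover, for $\abs{k}$ large the core geodesic $c_k$ of the filling solid torus has length less than the length of the shortest closed geodesic of $N$ and lies in a deep tube whose complement is $(1+\epsilon)$--bi-Lipschitz to a fixed compact piece of $N$, so $c_k$ is the \emph{unique} shortest closed geodesic of $N_k$. Hence every isometry of $N_k$ preserves $c_k$ setwise; removing an invariant tube around $c_k$ recovers $N$, and since $N_k$ has a single cusp the resulting self-diffeomorphism must fix the cusp of $N$ as well as the torus $\partial_2 N$, and it carries the meridian of that torus --- which is precisely the filling slope $(6k+1,k)$ --- to itself. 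Passing to the isotopic isometry of $N$, we obtain, for all $\abs{k}$ sufficiently large, an embedding
\[
  \mathrm{Isom}(N_k)\;\hookrightarrow\;\bigl\{\,\psi\in\mathrm{Isom}(N)\ :\ \psi(\partial_2 N)=\partial_2 N \ \text{ and }\ \psi_\ast \ \text{fixes the slope}\ (6k+1,k)\,\bigr\}.
\]

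Finally I would show the right--hand group is trivial for $\abs{k}$ large. Symmetries of $N$ that interchange the two cusps are immediately excluded. For a nontrivial $\psi$ fixing $\partial_2 N$, the matrix $\psi_\ast\in GL_2(\Z)$ has finite order: if $\psi_\ast=\pm I$ it fixes \emph{every} slope and would make \emph{every} $N_k$ symmetric, whereas if $\psi_\ast\neq\pm I$ it acts nontrivially on the set of slopes and so fixes only finitely many of them, ruling out only finitely many $k$. Reading off the recorded matrices, one checks that no nontrivial element of $\mathrm{Isom}(N)$ fixing $\partial_2 N$ has $\psi_\ast=\pm I$, and lists the finitely many exceptional $k$; together with the finitely many $k$ discarded above, this leaves $\mathrm{Isom}(N_k)=\{1\}$ for all remaining large $\abs{k}$, i.e.\ $N_k$ is asymmetric. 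Since the argument only used that the filling slopes form a sequence of distinct slopes, the $(6k-1,k)$ fillings are handled identically.

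The step I expect to be the main obstacle --- and the one on which the whole approach rests --- is the rigorous determination of $\mathrm{Isom}(N)$ for the two--cusped link exterior: certifying its canonical cellulation requires the equal--area device of Proposition~\ref{prop:multicanon}, and one must in particular verify the decisive fact that no nontrivial symmetry of $N$ restricts to $\pm I$ on the cusp being filled, since such a symmetry would survive every filling and defeat the argument.
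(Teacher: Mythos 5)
Your proposal is correct and takes essentially the same approach as the paper: certify the canonical triangulation of the two-cusped exterior $N$ via Proposition~\ref{prop:multicanon}, read off from its combinatorial automorphisms that the only nontrivial symmetry of $N$ swaps the two cusps (so the identity is the only cusp-preserving isometry), and then apply the unique-shortest-core-geodesic Dehn filling argument of Lemma~\ref{lem:hyperdehn}. Your additional bookkeeping of the induced matrices $\psi_\ast$ and of which slopes they fix is a harmless (and somewhat more robust) refinement, but it is not needed here, since the certified computation shows there are no nontrivial cusp-preserving isometries of $N$ to analyze.
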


  \begin{lemma}\label{lem:linkfilling}
    For all $k \in \Z$, the $(1,0)$ and $(4,1)$ Dehn fillings on the
    remaining cusp of $N_k$ are lens spaces of coprime orders $\abs{6k+1}$
    and $\abs{15k + 4}$, respectively.
  \end{lemma}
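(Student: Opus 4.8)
Recall that $N$ is the exterior of the two-component link $L = K_1 \cup K_2$ of Figure~\ref{fig:link}, that $N_k$ is the $(6k+1, k)$ filling on the cusp of $K_2$, and hence that the ``remaining cusp'' of $N_k$ is the cusp of $K_1$. The first step is to record that each component of $L$ is unknotted --- this can be read off the diagram, and is forced in any case by the conclusion of the lemma together with the component-swapping symmetry of $L$ --- so that $W = S^3 \setminus N(K_2)$ is a solid torus containing $K_1$. With the SnapPy peripheral framing the $(1,0)$-slope on a cusp is its meridian, so the $(6k+1,k)$ filling on the cusp of $K_2$ is the $(6k+1)/k$ Dehn surgery on $K_2 \subset S^3$. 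The $(1,0)$-filling of the remaining cusp of $N_k$ is then the meridional filling on the cusp of $K_1$, which caps $K_1$ off trivially and recovers the ambient closed manifold: the $(6k+1)/k$ surgery on the unknot $K_2$, i.e.\ the lens space $L(6k+1, k)$ (up to orientation), of order $\abs{6k+1}$. This disposes of the first filling for every $k$.

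For the $(4,1)$-filling of the remaining cusp I would present the closed manifold as rational surgery on $L \subset S^3$ with coefficient $4$ on $K_1$ and $(6k+1)/k$ on $K_2$, and argue that it is a lens space for every $k$ by identifying $K_1$, viewed as a knot in the solid torus $W$, as a Berge--Gabai knot --- one admitting a Dehn surgery to a solid torus --- with $(4,1)$ the relevant solid-torus slope. Concretely this amounts to a sequence of Rolfsen twists and blow-downs on the surgery diagram, valid uniformly in $k$, that recognises the picture either as a genus-one Heegaard splitting or as a known torus-knot surgery; alternatively, since the whole family $\{N_k\}$ arises from a single member by twisting along an unknotted curve in an augmented link, one may seek to reduce to one value of $k$ and check that base case with SnapPy and Regina. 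Granting this, $(4,1)$-surgery on $K_1$ inside $W$ yields a new solid torus $W'$, and the filling in question is $W'$ glued along its boundary torus to the outer solid torus produced by the $(6k+1)/k$ surgery on $K_2$; this is a genus-one Heegaard splitting and hence a lens space, since finiteness of $H_1$ rules out $S^2 \times S^1$. The order is read off the $2 \times 2$ linking matrix: because $\operatorname{lk}(K_1, K_2) = \pm 3$, the two surgery slopes are $(4, \pm 3)$ and $(\pm 3k,\, 6k+1)$ in the basis $\{\mu_1, \mu_2\}$ of $\HoneZ{\partial N}$, with determinant $\pm(24k + 4 - 9k) = \pm(15k+4)$.

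Coprimality is then a one-line check: any common divisor of $6k+1$ and $15k+4$ divides $5(6k+1) - 2(15k+4) = -3$, while $6k+1 \equiv 1 \pmod 3$, so $\gcd(6k+1, 15k+4) = 1$ for all $k \in \Z$. The $(6k-1,k)$ case needed for Theorem~\ref{thm:link} follows from the identical argument --- or from the orientation-preserving symmetry of $L$ interchanging the components --- and gives lens space fillings of coprime orders $\abs{6k-1}$ and $\abs{15k-4}$. The one genuinely delicate point is the uniform-in-$k$ analysis of the $(4,1)$-filling: certifying that $(4,1)$-surgery on $K_1$ inside $W$ is again a solid torus, equivalently that $K_1 \subset W$ is the expected Berge--Gabai knot. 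Once that is in place, the homological bookkeeping for the orders and the coprimality statement are routine.
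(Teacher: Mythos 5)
Your treatment of the $(1,0)$ filling and of coprimality is fine and matches the paper (unknotted components give the order-$\abs{6k+1}$ lens space; a common divisor of $6k+1$ and $15k+4$ divides $3$, which is impossible). The gap is in your key claim for the $(4,1)$ filling: that $(4,1)$-surgery on $K_1$ inside the solid torus $W = S^3 \setminus N(K_2)$ yields a solid torus, i.e.\ that $K_1 \subset W$ is a Berge--Gabai knot with $(4,1)$ a solid-torus slope. This is false. The manifold you call $W'$ is exactly the manifold $P$ obtained by $(4,1)$-filling the first cusp of $N$, and one computes $\pi_1(P) = \langle a,b \mid b^3a^5 = 1\rangle$, a $(3,5)$ torus-knot-type group, which is nonabelian and in particular not $\Z$. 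So $P$ is not a solid torus: it is a Seifert fibered space over the disc with two exceptional fibers of orders $3$ and $5$. Consequently your ``two solid tori glued along their boundary'' picture does not exist, and the genus-one Heegaard splitting argument collapses; the linking-matrix determinant $\pm(15k+4)$ only computes $\abs{H_1}$ of the filled manifold and does not by itself show it is a lens space. Your fallback of checking a single value of $k$ by computer also does not help: without the structural fact about $P$, a lens space filling for one $k$ says nothing about the others.

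The repair is precisely the structural fact above. Because $P$ is Seifert fibered over $D^2$ with two exceptional fibers, its regular fiber is the slope $\beta = \mu^6\lambda$ (central in $\pi_1(P)$, equal to $b^3 = a^{-5}$ in the presentation with $\mu = b^2a^2$), and every filling along a slope meeting $\beta$ once is a lens space; the slopes $(6k+1,k)$ are exactly $\mu\beta^k$, which is why the family $N_k$ was chosen this way. The order $\abs{15k+4}$ then falls out of the presentation: adding the relator $\mu\beta^k = b^2a^{2-5k}$ and simplifying gives $\langle a \mid a^{15k+4} = 1\rangle$, consistent with (and replacing) your determinant computation. So the overall architecture you chose---fill the $K_1$ cusp first, understand the resulting one-cusped piece uniformly in $k$, then fill the $K_2$ cusp---is the right one, but the piece you need to identify is a small Seifert fibered space (a torus-knot-exterior analogue), not a solid torus, and the lens space conclusion comes from Seifert fibered surgery theory together with the explicit fundamental group calculation rather than from a genus-one Heegaard splitting.
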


  \begin{proof}[Proof of Lemma \ref{lem:linkasym}]
    Let $\cT$ be the particular triangulation of
    $N$ included in \cite{ancillary}.   Using \cite{hikmot2013verified} and
    Proposition~\ref{prop:multicanon}, we verified that $\cT$ is in fact
    the canonical triangulation of $N$.  The triangulation $\cT$ has
    only two combinatorial isomorphisms: the identity and one that
    interchanges the two cusps.  Hence by Corollary~\ref{cor:asym}
    the only isometry of $N$ that preserves the each cusp is the
    identity.  The lemma now follows from the argument used to
    prove Lemma~\ref{lem:hyperdehn}.
  \end{proof}

  \begin{remark}\label{rem:referee}
    The referee kindly pointed out that the link $L$ in
    Figure~\ref{fig:link} is the Montesinos link
    $M(0; (5,3), (3, -2), (5,1)),$ and hence its symmetry group
    $\pi_0\big( \mathrm{Diff}(S^3, L)\big)$ can be computed using
    Boileau and Zimmermann \cite{BoileauZimmermann1987}. Unlike the
    case of knots \cite{GordonLuecke1989}, a symmetry of a link
    exterior need not send meridians to meridians; for example, the
    symmetry group of the $(-2, 3, 8)$-pretzel link is $\Z/2\Z$, but
    the symmetry group of its exterior has order 8.  While the proof
    of Lemma~\ref{lem:linkasym} given above requires that we know the
    full symmetry group of the exterior $N$, rather than just
    $\pi_0\big( \mathrm{Diff}(S^3, L)\big)$, by working harder one can
    prove Lemma \ref{lem:linkasym} from the results in
    \cite{BoileauZimmermann1987} without reference to a canonical
    triangulation of $N$.  We now sketch this alternative argument.

    Using \cite{BoileauZimmermann1987}, one computes that the symmetry
    group of the link $L$ is $\Z/2\Z$ where the generator interchanges
    the two components.  If infinitely many $N_k$ admit a nontrivial
    symmetry, then since the symmetry group of $N$ is finite, there is
    an infinite set of indices $k_i$ where said symmetry of $N_{k_i}$
    is induced by a fixed symmetry $f$ of $N$.  We will show that $f$
    is a symmetry of the underlying link $L$ and consequently $f$ must
    be the identity.  Let $C_1$ and $C_2$ be torus cross-sections for
    the two cusps of $N$.  For each $i$, the symmetry $f$ preserves
    the unoriented isotopy class of the Dehn filling curve
    $\gamma_i \subset C_2$ used to form $N_{k_i}$.  Again passing to a
    subsequence, we can assume that $f$ either preserves the
    \emph{oriented} isotopy class of all $\gamma_i$ or reverses the
    orientation on all of them.  In the former case, it follows that
    $f$ restricted to $C_2$ is isotopic to the identity; in the
    latter, it must be isotopic to the elliptic involution.  Since the
    two components of $L$ have nonzero linking number, the maps
    $H_1(C_i ; \Z) \to H_1(N; \Z)$ are both injective; it follows that
    the action of $f$ on $C_2$ determines the action of $f$ on
    $H_1(C_1; \Z)$.  Consequently, in either case, the map $f$ must
    preserve the unoriented isotopy classes of the meridians which
    record $L$ in both $C_1$ and $C_2$, and hence comes from a
    symmetry of $(S^3, L)$ as claimed.
  \end{remark}

  \begin{proof}[Proof of Lemma \ref{lem:linkfilling}]
    It is clear from Figure~\ref{fig:link} that both link components
    are unknotted; the $(1,0)$ filling on $N_k$ is thus a lens
    space of order $\abs{6k+1}$.

    Turning now to the other filling, let $P$ denote the $(4,1)$
    filling of the first cusp of $N$.  The key idea is that $P$
    is Seifert fibered over the disc with two exceptional fibers of
    orders 3 and 5, and hence has infinitely many lens space Dehn
    fillings; we chose the fillings defining the
    $N_k$ to be these lens space slopes.  If $\pair{\mu,
      \lambda}$ is a meridian-longitude basis (with respect to the
    standard link framing) for $\pi_1(\partial P)$, SnapPy easily
    computes that
    \[
    \pi_1(P) = \spandef{a, b}{b^3a^5 = 1} \mtext{with $\mu = b^2a^2$
      and $\beta \assign \mu^6 \lambda = b^3 = a^{-5}$.}
    \]
    In particular, the $(6k+1, k)$ filling, which is along the slope
    $\mu \beta^k$, has fundamental group $\spandef{a,b}{b^3a^5 = 1, \
      \mu \beta^k = b^2a^{2-5k} = 1}$.  Replacing the second relator
    by its product on the left with the inverse of the first relator
    yields the following presentation:
    \[
    \spandefm{\big}{a,b}{b^3 a^5 = 1, \ a^{-(5k+3)} = b} \ = \ 
      \spandefm{\big}{a}{a^{15k + 4} = 1}
    \]
    Thus the $(4, 1)$ filling on $N_k$ is a lens space whose first
    homology has order $\abs{15k+4}$.  We conclude the proof for the
    $(6k+1, k)$ filling by noting that $p_1 = 6k + 1$ and $p_2 = 15
    k + 4$ are coprime, since $-(5k+3)p_1 + (2k + 1)p_2 = 1 \text{ for any }k$.
   \end{proof}

 The $(6k-1, k)$ case differs only in that the lens
  spaces have order $p_1' = 6k-1$ and $\abs{p_2'}$, where
  $p_2' = 15k - 4$.  These are coprime since $-(5k-3)p_1'
  + (2k-1)p_2' = 1$.
\end{proof}

{\RaggedRight \bibliographystyle{nmd/math}
  \small
  \bibliography{NoSymLSpace} }

\newcommand{\etalchar}[1]{$^{#1}$}
\begin{thebibliography}{HIKMOT}

\bibitem[BDH]{BakerDoleshalHoffman}
K.~L. Baker, B.~G. Doleshal, and N.~Hoffman.
\newblock {On manifolds with multiple lens space filings}.
\newblock To appear in Bolet\'{i}n de la Sociedad Matem\'{a}tica Mexicana,
  2013.
\newblock \href{http://arxiv.org/abs/arXiv:1308.5002}{{\tt arXiv:1308.5002}}.

\bibitem[BBCW]{BBCW2012}
M.~Boileau, S.~Boyer, R.~Cebanu, and G.~S. Walsh.
\newblock \href{http://dx.doi.org/10.2140/gt.2012.16.625}{{Knot
  commensurability and the {B}erge conjecture}}.
\newblock {\em Geom. Topol.} {\bf 16} (2012), 625--664.
\newblock \href{http://arxiv.org/abs/arXiv:1008.1034}{{\tt arXiv:1008.1034}},
  \mathreviewsnumber{2928979}.

\bibitem[BZ]{BoileauZimmermann1987}
M.~Boileau and B.~Zimmermann.
\newblock \href{http://dx.doi.org/10.1007/BF01458332}{{Symmetries of
  nonelliptic {M}ontesinos links}}.
\newblock {\em Math. Ann.} {\bf 277} (1987), 563--584.
\newblock \mathreviewsnumber{891592 (88h:57004)}.

\bibitem[BS]{bonahonSiebenmann2010}
F.~Bonahon and L.~Siebenmann.
\newblock {New geometric splittings of classical knots, and the classification
  and symmetries of arborescent knots}, 2010.
\newblock \url{http://www-bcf.usc.edu/~fbonahon/Research/Preprints/BonSieb.pdf}

\bibitem[BGW]{BoyerGordonWatson2013}
S.~Boyer, C.~M. Gordon, and L.~Watson.
\newblock \href{http://dx.doi.org/10.1007/s00208-012-0852-7}{{On {L}-spaces and
  left-orderable fundamental groups}}.
\newblock {\em Math. Ann.} {\bf 356} (2013), 1213--1245.
\newblock \href{http://arxiv.org/abs/arXiv:1107.5016}{{\tt arXiv:1107.5016}},
  \mathreviewsnumber{3072799}.

\bibitem[Bur]{Burton2014}
B.~A. Burton.
\newblock {The cusped hyperbolic census is complete}.
\newblock Preprint 2014, 32 pages.
\newblock \href{http://arxiv.org/abs/arXiv:1405.2695}{{\tt arXiv:1405.2695}}.

\bibitem[BBP{\etalchar{+}}]{Regina}
B.~A. Burton, R.~Budney, W.~Pettersson, et~al.
\newblock {Regina 4.95: Software for 3-manifold topology and normal surface
  theory}.
\newblock Available from \url{http://regina.sourceforge.net/}, 1999--2014.

\bibitem[CHW]{CallahanHildebrandWeeks1999}
P.~J. Callahan, M.~V. Hildebrand, and J.~R. Weeks.
\newblock \href{http://dx.doi.org/10.1090/S0025-5718-99-01036-4}{{A census of
  cusped hyperbolic {$3$}-manifolds}}.
\newblock {\em Math. Comp.} {\bf 68} (1999), 321--332.
\newblock With microfiche supplement.
\newblock \mathreviewsnumber{1620219 (99c:57035)}.

\bibitem[CGHN]{CoulsonGoodmanHodgsonNeumann2000}
D.~Coulson, O.~A. Goodman, C.~D. Hodgson, and W.~D. Neumann.
\newblock \href{http://projecteuclid.org/euclid.em/1046889596}{{Computing
  arithmetic invariants of 3-manifolds}}.
\newblock {\em Experiment. Math.} {\bf 9} (2000), 127--152.
\newblock \mathreviewsnumber{1758805 (2001c:57014)}.

\bibitem[CDW]{SnapPy}
M.~Culler, N.~M. Dunfield, and J.~R. Weeks.
\newblock {Snap{P}y v2.2, a computer program for studying the geometry and
  topology of 3-manifolds}.
\newblock Available from \url{http://snappy.computop.org}, (2014/7/8).

\bibitem[CGLS]{CGLS}
M.~Culler, C.~M. Gordon, J.~Luecke, and P.~B. Shalen.
\newblock \href{http://dx.doi.org/10.2307/1971311}{{Dehn surgery on knots}}.
\newblock {\em Ann. of Math. (2)} {\bf 125} (1987), 237--300.
\newblock \mathreviewsnumber{881270 (88a:57026)}.

\bibitem[DHL]{ancillary}
N.~M. Dunfield, N.~R. Hoffman, and J.~E. Licata.
\newblock {Ancillary files with the arXiv version of this paper}.
\newblock \href{http://arxiv.org/abs/arXiv:1407.7827}{{\tt arXiv:1407.7827}}.

\bibitem[EP]{EpsteinPenner1988}
D.~B.~A. Epstein and R.~C. Penner.
\newblock \href{http://projecteuclid.org/euclid.jdg/1214441650}{{Euclidean
  decompositions of noncompact hyperbolic manifolds}}.
\newblock {\em J. Differential Geom.} {\bf 27} (1988), 67--80.
\newblock \mathreviewsnumber{918457 (89a:57020)}.

\bibitem[FGGTV]{FominykhGaroufalidisGoernerTarkaevVesnin2015}
E.~Fominykh, S.~Garoufalidis, M.~Goerner, V.~Tarkaev, and A.~Vesnin.
\newblock {A census of tetrahedral hyperbolic manifolds}.
\newblock Preprint 2015, 26 pages.
\newblock \href{http://arxiv.org/abs/arXiv:1502.00383}{{\tt arXiv:1502.00383}}.

\bibitem[Gab]{Gabai2001}
D.~Gabai.
\newblock \href{http://projecteuclid.org/euclid.jdg/1090348284}{{The {S}male
  conjecture for hyperbolic 3-manifolds: {${\rm Isom}(M^3)\simeq{\rm
  Diff}(M^3)$}}}.
\newblock {\em J. Differential Geom.} {\bf 58} (2001), 113--149.
\newblock \mathreviewsnumber{1895350 (2003c:57016)}.

\bibitem[GL]{GordonLuecke1989}
C.~M. Gordon and J.~Luecke.
\newblock \href{http://dx.doi.org/10.2307/1990979}{{Knots are determined by
  their complements}}.
\newblock {\em J. Amer. Math. Soc.} {\bf 2} (1989), 371--415.
\newblock \mathreviewsnumber{965210 (90a:57006a)}.

\bibitem[Gor]{GordonAIM}
C.~M. Gordon.
\newblock {Lecture at AIM Workshop on Triangulations, Heegaard Splittings, and
  Hyperbolic Geometry}.
\newblock December, 2007.

\bibitem[HW1]{HenryWeeks1992}
S.~R. Henry and J.~R. Weeks.
\newblock \href{http://dx.doi.org/10.1142/S0218216592000100}{{Symmetry groups
  of hyperbolic knots and links}}.
\newblock {\em J.~Knot Theory Ramifications} {\bf 1} (1992), 185--201.
\newblock \mathreviewsnumber{1164115 (93e:57007)}.

\bibitem[HW2]{HodgsonWeeks1994}
C.~D. Hodgson and J.~R. Weeks.
\newblock \href{http://projecteuclid.org/euclid.em/1048515809}{{Symmetries,
  isometries and length spectra of closed hyperbolic three-manifolds}}.
\newblock {\em Experiment. Math.} {\bf 3} (1994), 261--274.
\newblock \mathreviewsnumber{1341719 (97a:57013)}.

\bibitem[HIKMOT]{hikmot2013verified}
N.~Hoffman, K.~Ichihara, M.~Kashiwagi, H.~Masai, S.~Oishi, and A.~Takayasu.
\newblock {Verified computations for hyperbolic 3-manifolds}.
\newblock code available from:
  \url{http://www.oishi.info.waseda.ac.jp/~takayasu/hikmot/}, 2013.
\newblock \href{http://arxiv.org/abs/arXiv:1310.3410}{{\tt arXiv:1310.3410}}.

\bibitem[Mon]{montesinos1973variedades}
J.~M. Montesinos.
\newblock {Variedades de seifert que son recubricadores ciclicos rami cados de
  dos hojas, Boletino Soc}.
\newblock {\em Mat. Mexicana} {\bf 18} (1973), 1--32.

\bibitem[OS]{OzsvathSzabo2005}
P.~Ozsv{\'a}th and Z.~Szab{\'o}.
\newblock \href{http://dx.doi.org/10.1016/j.top.2005.05.001}{{On knot {F}loer
  homology and lens space surgeries}}.
\newblock {\em Topology} {\bf 44} (2005), 1281--1300.
\newblock \href{http://arxiv.org/abs/arXiv:math/0303017}{{\tt
  arXiv:math/0303017}}, \mathreviewsnumber{2168576 (2006f:57034)}.

\bibitem[Ras]{Rasmussen2007}
J.~Rasmussen.
\newblock {Lens space surgeries and L-space homology spheres}.
\newblock Preprint 2007, 23 pages.
\newblock \href{http://arxiv.org/abs/arXiv:0710.2531}{{\tt arXiv:0710.2531}}.

\bibitem[Ril]{Riley}
R.~Riley.
\newblock {Parabolic representations and symmetries of the knot 932,
  from:$\backslash$ Computers and Geometry and Topology",(MC Tangora, editor)}.
\newblock {\em Lecture Notes in Pure and Applied Math} {\bf 114}, 297--313.

\bibitem[Thu]{ThurstonNotes1979}
W.~P. Thurston.
\newblock {The geometry and topology of 3-manifolds}, 1978--80, 2002.
\newblock Lecture notes, 360 pages.
\newblock \url{http://www.msri.org/publications/books/gt3m}

\bibitem[Weeks1]{Weeks1993convex}
J.~R. Weeks.
\newblock \href{http://dx.doi.org/10.1016/0166-8641(93)90032-9}{{Convex hulls
  and isometries of cusped hyperbolic {$3$}-manifolds}}.
\newblock {\em Topology Appl.} {\bf 52} (1993), 127--149.
\newblock \mathreviewsnumber{1241189 (95a:57021)}.

\bibitem[Weeks2]{Weeks2005}
J.~R. Weeks.
\newblock
  \href{http://dx.doi.org/10.1016/B978-044451452-3/50011-3}{{Computation of
  hyperbolic structures in knot theory}}.
\newblock In {\em Handbook of knot theory}, pages 461--480. Elsevier B. V.,
  Amsterdam, 2005.
\newblock \href{http://arxiv.org/abs/arXiv:math/0309407}{{\tt
  arXiv:math/0309407}}, \mathreviewsnumber{2179268 (2006k:57027)}.

\end{thebibliography}
\end{document}